\declaretheoremstyle[bodyfont=\normalfont]{noncursive}
\declaretheorem{theorem}
\declaretheorem[numberwithin=section]{lemma}
\declaretheorem[numberlike=lemma]{proposition}
\declaretheorem[style=noncursive,numberlike=lemma]{observation}
\newcommand{\im}{\ensuremath{\mbox{\rm Im}\,}}
\newcommand{\re}{\ensuremath{\mbox{\rm Re}\,}}
\newcommand{\CC}[1]{\mathbb{C}^{#1}}
\newcommand{\RR}[1]{\mathbb{R}^{#1}}
\newcommand{\z}{\frac{\partial}{\partial z_1}}
\newcommand{\zz}{\frac{\partial}{\partial z_2}}
\newcommand{\zzz}{\frac{\partial}{\partial z_3}}
\newcommand{\g}{\mathfrak g}
\newcommand{\lr}{\longrightarrow}
\numberwithin{equation}{section}
\newcommand{\hol}[1]{\mathfrak{hol}^{#1}\,}
\newcommand{\aut}[1]{\mathfrak{aut}^{#1}\,}
\def\Label#1{\label{#1}}
\def\1#1{\overline{#1}}
\def\2#1{\widetilde{#1}}
\def\3#1{\widehat{#1}}
\def\4#1{\mathbb{#1}}
\def\5#1{\frak{#1}}
\def\6#1{{\mathcal{#1}}}
\title[Classification of homogeneous strictly pseudoconvex hypersurfaces]{Classification of homogeneous strictly pseudoconvex hypersurfaces in $\mathbb C^3$}
\author {I. Kossovskiy}
\address{Department of Mathematics, Masaryk University in Brno//
Faculty of Mathematics, University of Vienna}
\email{kossovskiyi@math.muni.cz, ilya.kossovskiy@univie.ac.at}
\author {A. Loboda}
\address{Voronez State Technical University}
\email{lobvgasu@yandex.ru}
\begin{document}

\maketitle

\date{\today}

\begin{abstract}
Locally homogeneous strictly pseudoconvex hypersurfaces in $\CC{2}$ were classified by E.\,Cartan in 1932. In this work, we complete the classification of locally homogeneous strictly pseudoconvex hypersurfaces in $\CC{3}$.
\end{abstract}

\tableofcontents

\section{Introduction}

\subsection{Homogeneous CR-manifolds} Homogeneous real submanifolds in complex space form an important class of embedded CR-manifolds in complex space. Usually, one distinguishes between the {\em global} and the {\em local} homogeneities, respectively. Globally homogeneous (compact) CR-manifolds are, in turn, a rare find in CR-geometry (see, e.g., \cite{morimoto}), that is why most of the work on homogeneous CR-manifolds is dedicated to the locally homogeneous ones. Recall that a CR-manifolds $M\subset\CC{N}$  is called {\em locally homogeneous} if the CR-geometries at any two points in it are isomorphic to each other, that is, for every  $p,q\in M$, there exists a CR-diffeomorphism $H:\,(M,p)\lr (M,q)$ between the germs of $M$ at $p$ and $q$ respectively.
 A useful modern exposition of different notions of homogeneity in CR-geometry is given in the paper \cite{zaitsevhomog} of Zaitsev. According to \cite{zaitsevhomog}, every locally homogeneous CR-submanifold can be already assumed to be {\em real-analytic}, and the local homogeneity near one point in $M$ propagates analytically along $M$. Furthermore, the local homogeneity is equivalent to each of the following three conditions:

\medskip

(1) For every $p,q\in M$, there exists a local biholomorphism $H:\,(M,p)\lr (M,q)$ (i.e., the germs of $M$ at every two points on it are biholomorphic).

\medskip

(2) Near every point $p\in M$, there exists a locally transitive (real) Lie group action  on $M$ by biholomorphic transformation.

\medskip

(3) For each $p\in M$, the {\em infinitesimal automorphism algebra} $\mathfrak{hol}\,(M,p)$ of $M$ at $p$ (i.e., the Lie algebra of holomorphic vector fields
$$X=\left\{f_1(z)\z+\cdots+ f_N(z)\frac{\partial}{\partial z_n}\right\}$$
such that their coefficients $f_j$ are holomorphic near $p$ and $\re X$ is tangent to $M$ pointwise) is transitive on $M$ near $p$, that is, the values at $p$ of the vector fields $X\in\mathfrak{hol}\,(M,p)$ span the entire tangent space $T_p M$.

\medskip

We note here that  $\mathfrak{hol}\,(M,p)$ is precisely the algebra of all vector fields in $\CC{N}$ the flow of which consists of holomorphic transformations and preserves $M$, locally near $p$.

Since the pioneering 1932  work of E.\, Cartan \cite{cartan} who classified all locally homogeneous strictly pseudoconvex hypersurfaces in $\CC{2}$ (which  implies the classification of {\em all} locally homogeneous $3$-dimensional CR-manifolds), a lot of work has been dedicated to the general project of holomorphic classification of locally homogeneous CR-manifolds. For some complete classifications, we shall  mention the work \cite{4homog} of Beloshapka-Kossovskiy who classified locally homogeneous $4$-dimensional CR-manifolds,   and the work of Fels-Kaup ({\em Acta Math. 2008}) who classified  all Levi-degenerate locally homogeneous $5$-dimensional CR-manifolds. However, somewhat surprisingly, the classification of locally homogeneous {\em strictly pseudoconvex} hypersurfaces in $\CC{3}$ (which seems to be the most natural development of Cartan's 1932 work) has been open till present in its full generality. It is the main goal of this paper is to provide finally such a classification. We do so by treating the remaining open case when a locally homogeneous hypersurface under consideration is {\em simply homogeneous}, that is, it admits a {\em free} local Lie group action (in other words, its  {\em isotropy algebra}
$$\mathfrak{aut}\,(M,p):=\{X\in\mathfrak{hol}\,(M,p):\,\,X|_p=0\}$$
at the reference point $p$ is trivial).

Before stating our classification theorem, we outline below the progress in the classification of locally homogeneous strictly pseudoconvex hypersurfaces in $\CC{3}$ in the  case when the isotropy algebra $\mathfrak{aut}\,(M,p)$ has a positive dimension. In  the latter case, a powerfull tool for the classification is the normal form theory for Levi-nondegenerate hypersurfaces due to (Chern-)\,Moser \cite{chern}. For a locally homogeneous hypersurface, its complete normal form is simply {\em constant} along a hypersurface. Furthermore, the absense of a non-trivial stability group of a hypersurface at a point $p$ puts a lot of restrictions on the normal form at $p$. These two aspects put together make it possible to detect a few coefficients of the normal form competely determining a homogeneous hypersurface, and classify subsequently the hypersurfaces under consideration. This approach was realized  mainly by the school of A.\,Vitushkin: see e.g. Ezhov-Loboda-Schmalz \cite{els} and Loboda \cite{lobdim,lob2dim,lob1dim}. In particular, it was shown that the possible dimensions of the isotropy algebra at a point are either $10$ (the spherical case), or otherwise $2,1$, or $0$. Strictly pseudoconvex locally homogeneous hypersurfaces with stability algebras of dimension $2$ and $1$ were classified in the work \cite{lob2dim,lob1dim}, respectively.   An alternative approach in the case $\mbox{dim}\,\mathfrak{aut}\,(M,p)>0$, employing already the Cartan moving frame method and representation theory for Lie algebras, was suggested by Doubrov-Medvedev-The in \cite{dmt}. In the latter work, the authors were able (among other significant results) to revisit Loboda's classification and supplement it by one missing hypersurface in the case of $1$-dimensional isotropy. The approach in \cite{dmt} shares certain traits with the approach of Fels-Kaup in \cite{kaup} used in the Levi-degenerate case.

We shall emphasize, however, that both mentioned approaches (the one based on normal forms and the one employing the moving frame method) strictly rely on the existence of a non-trivial isotropy algebra, and are not able to provide any information on the classification when $\mathfrak{aut}\,(M,p)=0$. In this way, the simply homogeneous case treated in the present paper remained open, as discussed above. Our treatment of this case is rather close to the original Cartan's approach in \cite{cartan}. That is, we use the existence of a (locally) transitively acting $5$-dimensional Lie algebra on a simply homogeneous hypersurface $M$, and then use subsequently the strict pseudoconvexity for providing certain normal forms already for the algebras of holomorphic vector fields  acting on $M$. When doing so, we  rely on the classification of $5$-dimensional real Lie algebras due to Mubarakazyanov \cite{mubarak}.

\subsection{Main results} We now provide our results in detail. Let us recall first the construction of a natural class of locally homogeneous strictly pseudoconvex hypersurfaces in $\CC{3}$. this is the class of {\em tubes over affinely homogeneous surfaces}. Let us take an affinely homogeneous {strictly geometrically convex} (resp. concave)  surface $B\subset\RR{3}$ (the {\em base} of the tube), and then consider the tubular CR-hypersurface
$$M=B+i\RR{3}=\{z\in\CC{3}:\,\,\re z\in B\}.$$
The hypersurface $M\subset\CC{3}$ is clearly strictly pseudoconvex. If now $\mathfrak a$ is the Lie algebra of (real) affine vector fields of the kind
$$L_j(x)\frac{\partial}{\partial x},\quad x\in\RR{3},\,\,j=1,2,...,k$$  acting transitively on  $B$, and $\mathfrak b$ is the $3$-dimensional abelian algebra spanned by the vector fields $i\frac{\partial}{\partial z_j}$ (and hence generating the real shifts $z\mapsto z+ib,\,b\in\RR{3}$), then the Lie algebra $\g$ spanned by $\mathfrak b$ and the vector fields
$$L_j(z)\frac{\partial}{\partial z},\quad z\in\CC{3},\,\,j=1,2,...,k$$
clearly acts transitively already on $M$. In this way, $M$ is locally homogeneous.  A substantial part of the final classification list in \autoref{theor2} below (but not the entire list!) is obtained precisely in this way.

Our main result below shows that, somewhat surprisingly, {\em all} locally homogeneous strictly pseudoconvex hypersurfaces in $\CC{3}$ can be reduced to the above tubular CR-hypersurfaces, under the simple homogeneity assumption.

\begin{theorem}\Label{theor1}
Let $M\subset\CC{3}$ be a simply homogeneous strictly pseudoconvex hypersurface. Then $M$ is locally biholomorphic near any point $p$ in it to the tube over an affinely homogeneous srtictly geometrically convex surface $B\subset\RR{3}$.
\end{theorem}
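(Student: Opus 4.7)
Since $M$ is simply homogeneous and $\dim_{\mathbb{R}} M = 5$, the algebra $\g := \mathfrak{hol}\,(M,p)$ acts simply transitively near $p$, and so has real dimension exactly $5$. The plan is to produce local holomorphic coordinates $w=(w_1,w_2,w_3)$ near $p$ in which $M$ becomes a tube $M = B + i\RR{3}$ over an affinely homogeneous (strictly geometrically convex or concave) surface $B \subset \RR{3}$. To do so I would look for a $3$-dimensional abelian ideal $\mathfrak{b} \subset \g$ that is \emph{totally real}, i.e., $\mathfrak{b}\cap i\mathfrak{b}=0$ inside the complex Lie algebra of germs of holomorphic vector fields at $p$. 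Given such a $\mathfrak{b}$, its complexification $\mathfrak{b}^{\mathbb{C}}$ is a $3$-dimensional complex abelian algebra of commuting, $\mathbb{C}$-linearly independent holomorphic vector fields near $p$; simultaneous straightening together with a $\mathbb{C}$-linear change of basis taking the totally real slice $\mathfrak{b}$ to the imaginary axis produces coordinates $w$ in which $\mathfrak{b} = \mathrm{span}_{\mathbb{R}}\bigl(i\partial/\partial w_1,\,i\partial/\partial w_2,\,i\partial/\partial w_3\bigr)$.

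In such coordinates the invariance of $M$ under $\mathrm{Re}(i\partial/\partial w_j)$ (the imaginary translations) forces $M = B + i\RR{3}$ with $B = M \cap \RR{3}$. For any other $Y = \sum_k f_k(w)\,\partial/\partial w_k \in \g$, the ideal relation $[Y,\,i\partial/\partial w_j] \in \mathfrak{b}$ translates into $\partial f_k/\partial w_j \in \mathbb{R}$ for all $j,k$; by holomorphy, $f_k$ must then be of the form $f_k(w) = \sum_j a_{kj}w_j + d_k$ with $a_{kj}\in\mathbb{R}$ and $d_k \in \mathbb{C}$. Hence every vector field in $\g$ is the holomorphic extension of a real-affine vector field on $\RR{3}$, and the projection $\g \to \g/\mathfrak{b}$ yields a $2$-dimensional Lie algebra of affine vector fields acting transitively on $B$. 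Strict pseudoconvexity of $M$ then translates into strict geometric (sign-definite) convexity of $B$, completing the reduction to a tube.

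The main obstacle, and the technically heavy part, is the algebraic existence of the ideal $\mathfrak{b}$. This is where Mubarakzyanov's classification \cite{mubarak} of $5$-dimensional real Lie algebras enters as the organizing scaffold: one must go through each isomorphism type $\g_{5,j}$ and either (i) exhibit a distinguished $3$-dimensional abelian ideal and verify that in every realization of $\g_{5,j}$ as holomorphic vector fields tangent to a strictly pseudoconvex $M$ this ideal is automatically totally real (giving the tube realization); or (ii) show that no such realization is compatible with a positive-definite Levi form, so that type cannot produce a simply homogeneous hypersurface in $\CC{3}$ and is excluded. The delicate cases are those whose nilradical is of dimension $\le 2$ and those containing a semisimple factor $\mathfrak{sl}(2,\mathbb{R})$ or $\mathfrak{so}(3)$: there the candidate ideal $\mathfrak{b}$ is not visible a priori and one has to leverage the strict pseudoconvexity carefully — either to unearth a disguised abelian ideal of the right form, or to rule the algebra out entirely.
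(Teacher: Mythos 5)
Your core reduction is exactly the one the paper uses: the statement that a $3$-dimensional abelian ideal forces $M$ to be a tube over an affinely homogeneous base is \autoref{ideal}, and the ``automatically totally real'' claim is supplied by \autoref{key3} (three commuting fields that are $\RR{}$-linearly independent at a point are automatically $\CC{}$-linearly independent there, which is deduced from strict pseudoconvexity via \autoref{key2}). Your computation that the ideal relation forces every field of $\g$ to be affine with real linear part, hence that $\g/\mathfrak b$ acts affinely and transitively on the base, also matches the paper's argument. For the large majority of Mubarakzyanov's list --- every algebra that visibly contains a $3$-dimensional abelian ideal --- your plan therefore goes through essentially verbatim.

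The gap is in your exclusion dichotomy for the remaining algebras. You propose to rule out an exceptional type by showing that ``no realization is compatible with a positive-definite Levi form.'' That mechanism is genuinely insufficient: several of the exceptional algebras ($\mathfrak m_{26}$, $\mathfrak g_{5,25}$, $\mathfrak g_{5,26}$, $\mathfrak g_{5,37}$, $\mathfrak m_{16}$, $\mathfrak m_{17}$) \emph{do} admit strictly pseudoconvex orbits --- for $\mathfrak g_{5,37}$ one branch of the normalization even produces spherical orbits --- so no Levi-form contradiction exists. The paper's exclusion for these cases works differently: after normalizing the realization, one exhibits an explicit holomorphic vector field $Y\notin\g$ with $[Y,\g]\subset\g$ (for instance $Y=iz_2\frac{\partial}{\partial z_2}$) which vanishes along a hypersurface met by every generic orbit and is therefore tangent to every orbit; this enlarges $\mathfrak{hol}\,(M,p)$ beyond $\g$ and contradicts \emph{simple} homogeneity rather than pseudoconvexity. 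Without this third mechanism (and the spherical-orbit observation for $\mathfrak g_{5,37}$), your case analysis cannot be closed. Beyond that, the case-by-case normalization of the exceptional algebras --- which is where essentially all of the work of the theorem lies --- is acknowledged but not carried out in your write-up, so as it stands the proposal is a correct strategy rather than a proof.
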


Putting together \autoref{theor1} with earlier  classifications in the case of positive-dimensional stabilizer and existing classifications of affinely homogeneous surfaces (see  Section 4 below), we finally obtain the complete classification of {\em all} locally homogeneous strictly pseudoconvex hypersurfaces in $\CC{3}$.

\begin{theorem}\Label{theor2}
Let $M\subset\CC{3}$ be a locally homogeneous  strictly pseudoconvex hypersurface. Then $M$ is locally biholomorphic near every point $p$ in it to one the following hypersurfaces:

\medskip

\noindent{\bf 1)} $\re z_3=|z_1|^2+|z_2|^2$  \quad (the hyperquadric)

\medskip

\noindent{\bf 2)} $\re z_3=\ln(1+|z_1|^2)+b\ln(1+|z_2|^2),\quad 0<b\leq 1$

\medskip

\noindent{\bf 3)} $\re z_3=\ln(1+|z_1|^2)-b\ln(1-|z_2|^2),\quad b>0,\,b\neq 1$

\medskip

\noindent{\bf 4)} $\re z_3=\ln(1-|z_1|^2)+b\ln(1-|z_2|^2),\quad 0<b\leq 1$

\medskip

\noindent{\bf 5)} $\re z_3=\varepsilon\ln(1+\varepsilon |z_1|^2)+|z_2|^2,\quad \varepsilon=\pm 1$

\medskip

\noindent{\bf 6)} $\re z_3=\pm(\re z_1)^\alpha+(\re z_2)^2,\quad
\pm\alpha(\alpha-1)>0,\,\,\alpha\neq 2$



\medskip

\noindent{\bf 7)} $\re z_3=\re z_1\cdot\ln(\re z_1)+(\re z_2)^2$

\medskip

\noindent{\bf 8)} $\re z_1\cdot \re z_3=-\re z_1\cdot \ln(\re z_1)+(\re z_2)^2$

\medskip

\noindent{\bf 9)} $\pm(\re z_1)^2\pm(\re z_2)^2+(\re z_3)^2=1$

\medskip

\noindent{\bf 10)} $1\pm(|z_1|^2+|z_2|^2)+|z_3|^2=a|z_1^2+z_2^2+z_3^2|, \quad a>1$

\medskip

\noindent{\bf 11)} $1\pm(|z_1|^2+|z_2|^2)-|z_3|^2=a|z_1^2+z_2^2-z_3^2|, \quad 0<a<1$

\medskip

\noindent{\bf 12)} $\re z_3=(\re z_1)^\alpha(\re z_2)^\beta,\quad
\alpha\beta(1-\alpha-\beta)>0,\,\,|\alpha|,|\beta|\leq 1,\,\,|\alpha|\leq |\beta|$

\medskip

\noindent{\bf 13)} $\re z_3=\Bigl((\re z_1)^2+(\re z_2)^2\Bigr)^\beta\cdot\exp{\Bigl(\alpha\arctan\frac{\re z_2}{\re z_1}\Bigr)},\quad \beta>\frac{1}{2},\,\,(\alpha,\beta)\neq (0,1)$



\medskip

\noindent{\bf 14)} $\re z_1\cdot \re z_3=(\re z_1)^2\cdot\ln(\re z_1)+(\re z_2)^2$

\medskip

\noindent{\bf 15)} $\re z_1\cdot \re z_3=\pm(\re z_1)^\alpha+(\re z_2)^2,\quad \pm(\alpha-1)(\alpha-2)>0$

\medskip

\noindent{\bf 16)} $\Bigl(\re z_3-\re z_1\cdot\re z_2+\frac{1}{3}(\re z_1)^3\Bigr)^2=\alpha \Bigl(\re z_1-\frac{1}{2}(\re z_1)^2\Bigr)^3,\quad \alpha< -\frac{8}{9}$

\medskip

\noindent{\bf 17)} $\re z_3=\re z_1\cdot(\alpha\ln(\re z_1)-\ln(\re z_2)), \quad \alpha>1$

\medskip

Here each of the hypersurfaces {\bf 1)}-- {\bf 17)} shall be considered near an arbitrary strictly pseudoconvex point $o$ in it.  Furthermore, the dimensions of the stability algebras for the hypersurfaces {\bf 1)}-- {\bf 17)} are as follows:

\smallskip

$\bullet$ $\mbox{dim}\,\mathfrak{hol}\,(M,o)=10$ for the hyperquadric {\bf 1)},

\smallskip

$\bullet$    $\mbox{dim}\,\mathfrak{hol}\,(M,o)=2$ for hypersurfaces {\bf 2)}-- {\bf 5)},

\smallskip

$\bullet$ $\mbox{dim}\,\mathfrak{hol}\,(M,o)=1$ for hypersurfaces {\bf 6)}-- {\bf 11)}, and

\smallskip

$\bullet$ $\mbox{dim}\,\mathfrak{hol}\,(M,o)=0$ for hypersurfaces {\bf 12)}-- {\bf 17)}.

\smallskip

Finally, any two hypersurfaces in the list {\bf 1)}-- {\bf 17)} are pairwise locally holomorphically inequivalent.
\end{theorem}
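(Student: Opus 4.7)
The plan is to deduce the classification by combining Theorem~\ref{theor1} with previously established results in the case of non-trivial isotropy and with the affine classification of homogeneous surfaces in $\RR{3}$. The starting point is the Vitushkin-school dichotomy recalled in the introduction: for any locally homogeneous strictly pseudoconvex hypersurface $M\subset\CC{3}$, the dimension of the isotropy algebra $\mathfrak{aut}\,(M,o)$ at any point $o\in M$ is either $10$, $2$, $1$, or $0$, with the first value corresponding to the spherical case. Accordingly, I split the argument into four cases.

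When $\dim\mathfrak{aut}\,(M,o)=10$, Chern--Moser theory forces $M$ to be locally biholomorphic to the sphere, which after biholomorphic straightening is item~\textbf{1)}. The cases $\dim\mathfrak{aut}\,(M,o)=2$ and $\dim\mathfrak{aut}\,(M,o)=1$ are, respectively, the classifications of Loboda~\cite{lob2dim,lob1dim}, the latter being corrected and completed by Doubrov--Medvedev--The~\cite{dmt}. Importing these results verbatim yields items~\textbf{2)}--\textbf{5)} and items~\textbf{6)}--\textbf{11)}, together with the indicated dimensions of the isotropy algebra in these ranges.

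The remaining case $\dim\mathfrak{aut}\,(M,o)=0$, i.e. the simply homogeneous case, is precisely the content of Theorem~\ref{theor1}: $M$ is locally biholomorphic to a tube $B+i\RR{3}$ over an affinely homogeneous strictly geometrically convex surface $B\subset\RR{3}$. I would then invoke the classification of such affinely homogeneous surfaces in $\RR{3}$ gathered in Section~4 of the paper, and pass to the associated tubes, yielding items~\textbf{12)}--\textbf{17)} in their stated parameter ranges. A short verification is still required to confirm that each such tube really has trivial isotropy (and therefore cannot secretly appear in blocks~\textbf{1)}--\textbf{11)}); this follows because any extra CR-symmetry of a tube $B+i\RR{3}$ would, via the characteristic foliation preserved by the automorphism group, descend to an extra affine symmetry of $B$, contradicting the choice of $B$ in the affine list.

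Finally, pairwise inequivalence of all items must be established. Inequivalence across the four blocks is automatic since $\dim\mathfrak{aut}\,(M,o)$ is a biholomorphic invariant. Within blocks~\textbf{2)}--\textbf{5)} and~\textbf{6)}--\textbf{11)}, pairwise inequivalence for distinct parameters is already part of the classifications in~\cite{lob2dim,lob1dim,dmt}, established there through Chern--Moser normal-form invariants. Within the simply homogeneous block~\textbf{12)}--\textbf{17)}, Theorem~\ref{theor1} reduces the question to affine inequivalence of the corresponding bases $B\subset\RR{3}$, which is standard in the affine classification. I expect the principal obstacle of the whole proof to lie in this last reduction: one must rule out that a biholomorphism between two simply homogeneous tubes $B_1+i\RR{3}$ and $B_2+i\RR{3}$ could come from something more exotic than an affine equivalence of $B_1$ and $B_2$. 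This rigidity can be extracted from the constancy of CR-invariants along the $5$-dimensional orbit together with the explicit shape of the infinitesimal automorphism algebra produced in the proof of Theorem~\ref{theor1} (the abelian imaginary-translation ideal being intrinsic), but the bookkeeping for the parameter boundaries of families~\textbf{12)}, \textbf{13)}, \textbf{15)}, \textbf{16)} and~\textbf{17)} requires care.
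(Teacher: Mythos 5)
Your overall architecture matches the paper's: split according to $\dim\mathfrak{aut}\,(M,o)\in\{10,2,1,0\}$, import \cite{lob2dim,lob1dim,dmt} for the positive-dimensional cases, and use \autoref{theor1} together with the affine classifications of \cite{dkr,ee} in the simply homogeneous case, reducing holomorphic equivalence of tubes to affine equivalence of their bases. For that last reduction, which you correctly flag as delicate, the paper's mechanism is \autoref{disting}: if the translation ideal $I$ is the \emph{unique} $3$-dimensional abelian ideal in the transitive algebra $\g$ of each tube, then any local biholomorphism must carry $I$ to $I$ and is forced to be real affine. Your remark that "the abelian imaginary-translation ideal is intrinsic" is the right idea, but it only becomes an argument once this uniqueness is verified case by case for 12)--17), which the paper does.

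The genuine gap is your justification that the tubes 12)--17) have trivial isotropy. You assert that any extra CR-symmetry of $B+i\RR{3}$ would descend to an extra affine symmetry of $B$. This is false: CR-automorphisms of a tube need not preserve the tubular structure, and the CR isotropy can be strictly larger than anything visible affinely. The tube over a paraboloid is spherical with $10$-dimensional isotropy although its base has a small affine symmetry group; closer to home, items 6)--8) of the list are themselves tubes over surfaces from the very same affine classification, yet carry a $1$-dimensional CR isotropy not induced by affine maps of the base. So affine rigidity of $B$ gives no control over $\mathfrak{aut}\,(M,o)$, and this is exactly where the paper has to do real work: tubes whose hypersurfaces have positive-dimensional stabilizer are first removed by comparison with the lists of \cite{lob2dim,lob1dim,dmt}, and triviality of the stabilizer for the survivors is then established by computing the Chern--Moser normal-form tensor $N_{22}(0)$ against Loboda's necessary condition $N_{22}(0)=E_0$, with the borderline parameters ($\alpha=4$ in 15), $\alpha=\beta=-1$ in 12)) requiring the further tensor $N_{23}(0)$, and case 16) being covered by \cite{C3}. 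Without a substitute for this step, your list 12)--17) and the stated isotropy dimensions are not justified.
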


\medskip

\begin{center} 
\bf Acknowledments
\end{center}

\medskip

The first author was supported by the GACR (Czech Grant Agency, grant 17-19427S) and the FWF (Austrian Science Fund) during the preparation of this paper. The second author was support by the RFFI (Russian Foundation for Basic Research) grant 17�01�00592-a.

\section{Principal approach}

As discussed above, we are concerned with the case of a $5$-dimensional Lie algebra transitively acting on a strictly pseudoconvex real hypersurface by CR-transformations. Our approach to the classification then is based on  realizing (abstract) $5$-dimensional Lie algebras acting transitively on a real hypersurface by holomorphic vector fields, and finding subsequently appropriate normal forms for such realizations. In accordance with that, we will make  extensive use of the classification of (abstract) $5$-dimensional Lie algebras up to an isomorphism. The latter was obtained by Mubarakazynov in \cite{mubarak}. For convenience of the reader, we give Mubarkazynov's list of $5$-dimensional Lie algebras in Appendix A.

In what follows, we fix the following notations and conventions: $(z_1,z_2,z_3)$ denote the coordinates in $\CC{3}$, $M$ denotes a (real-analytic) strictly pseudoconvex simply homogeneous  near a point $p\in M$ hypersurface in $\CC{3}$, and $\g$ a $5$-dimensional Lie algebra of holomorphic vector fields acting on $M$ locally transitively near the point $p$. We have, accordingly,
\begin{equation}\Label{holg}
\hol{}(M,p)=\g,\quad \aut{}(M,p)=0.
\end{equation}
We also denote by $X_j,\,j=1,...,5$ a collection of holomorphic vector fields from $\g$ defined in some neighborhood $U$ of the point $p$ and pointwice linearly  independent in $U$ (over $\RR{}$). Thus, we have $\g=\mbox{span}\,\{X_1,...,X_5\}$ pointwice in $U$. We call such a collection {\em a basis} for $\g$.

We also make use of the following

\smallskip

\noindent{\bf Convention.} Solely for the purposes of the proof of \autoref{theor1}, we assume tubular CR-hypersurfaces to be invariant under the {\em real} shifts
$$z\mapsto z+a,\quad a\in\RR{3}$$
(unlike the set up in the Introduction). Accordingly, a tubular real hypersurface looks as $$M=\RR{3}+iB,$$ where $B$ is a surface in $\RR{3}$, and $\mathfrak{hol}\,(M,0)$ contains the abelian subalgebra spanned by $\frac{\partial}{\partial z_j},\,j=1,2,3$, which fits better our normalization procedure for the Lie algebras of holomorphic vector fields.

Our goal is, based on properties of $\g$ as an abstract Lie algebra, bring the basis vector fields $X_1,...X_5$ (and hence $\g$ itself) by a series of biholomorphic transformations to a certain normal form, in which $\g$ is "maximally simplified". The latter makes it possible to either obtain a contradiction with the strong pseudoconvexity of $M$, or to  recognize $M$ (up to a local biholomorphic equivalence) as a tube over an affinely homogeneous hypersurface in $\RR{3}$.

Let us make the following useful

\begin{observation}\Label{generic}
In order to prove the assertion of \autoref{theor2} at a reference point $p\in M$, it is obviously sufficient to prove the same assertion at any other point $s\in M$ close by $p$ (in view of the local homogeneity of $M$). In view of that, we may change during the proof the reference point under consideration.
\end{observation}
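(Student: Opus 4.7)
The statement is essentially tautological given the setup, so the plan is to make the observation rigorous by unwinding the definition of local homogeneity and the transfer of the tube-structure conclusion under a local biholomorphism. The assertion to be transferred is: $(M,p)$ is locally biholomorphic to a tube over an affinely homogeneous strictly convex base $B\subset\RR{3}$. This property is manifestly invariant under germ-level biholomorphisms, so it suffices to produce, for any two nearby points $p,s\in M$, a local biholomorphism of $\CC{3}$ sending the germ $(M,p)$ to the germ $(M,s)$.

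First, I would recall from the discussion in the Introduction that local homogeneity of $M$ (at the level of CR-geometry) is equivalent to condition (1) in the trichotomy of Zaitsev, namely the existence, for every pair $p,q\in M$, of a local biholomorphism $H\colon (\CC{3},p)\to(\CC{3},q)$ with $H(M,p)=(M,q)$. Here the passage from a germ-CR-diffeomorphism to a germ-biholomorphism uses real-analyticity of $M$ together with strict pseudoconvexity, both of which are in force by assumption. Thus, having this $H$ for any pair $p,s\in M$ with $s$ close to $p$ is essentially a free input from the hypotheses.

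Next, I would assemble the transfer argument: suppose the assertion of \autoref{theor2} (equivalently, the conclusion of \autoref{theor1}, since the list in \autoref{theor2} is obtained from \autoref{theor1} together with the classification in the case of nontrivial isotropy) is known at some point $s\in M$, i.e.\ there is a local biholomorphism $\Phi\colon(\CC{3},s)\to(\CC{3},\Phi(s))$ mapping the germ $(M,s)$ to a tube germ $T=\RR{3}+iB$ over an affinely homogeneous strictly convex base $B$. Choosing $s$ close enough to $p$ so that the local homogeneity biholomorphism $H\colon(\CC{3},p)\to(\CC{3},s)$ exists, the composition $\Phi\circ H$ is a local biholomorphism sending $(M,p)$ to the tube germ $T$. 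This verifies the same tube-representation at $p$.

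The only genuine content, and thus the only potential obstacle, is making sure that the biholomorphism $H$ supplied by local homogeneity can always be arranged between the two prescribed base-points, rather than merely between some pair of points whose images we cannot control. Since the local homogeneity statement is symmetric in $p$ and $q$ (it asserts the existence of such $H$ for every ordered pair), no such obstruction arises, and the observation follows. I would conclude with the practical remark the authors make: during the main proof of \autoref{theor1} we may, at any stage, replace the reference point $p$ by any nearby $s\in M$ — e.g.\ a point at which the basis $X_1,\dots,X_5$ of $\g$ enjoys some generic nondegeneracy, or a point chosen to simplify a specific normal-form computation — without loss of generality.
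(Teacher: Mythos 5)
Your argument is correct and coincides with the paper's implicit reasoning: the Observation is stated without proof precisely because, by condition (1) of the local-homogeneity trichotomy quoted in the Introduction, the germs $(M,p)$ and $(M,s)$ are biholomorphic, and the tube-representation property is invariant under composing with that germ biholomorphism. Your write-up simply makes explicit what the authors leave as "obvious," so there is nothing to add.
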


We make use of the following two important propositions.

\begin{proposition}\Label{key2}
Let $X,Y\in\g$ be two vector fields such that

\smallskip

\noindent (i) $X,Y$ linearly independent over $\RR{}$ at a point $q\in M$;

\smallskip

\noindent (ii) the real span of $X,Y$ is a subalgebra in $\g$ (that is, $[X,Y]\in\mbox{span}_{\,\RR{}}\{X,Y\}$ at every point).

\smallskip

\noindent
Then $X,Y$ are also linearly independent over $\CC{}$ at $q$.
\end{proposition}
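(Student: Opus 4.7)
The plan is to argue by contradiction and produce a complex curve sitting inside $M$, which strict pseudoconvexity forbids. Assume that $X|_q$ and $Y|_q$ are $\C$-linearly dependent; since they are still $\R$-linearly independent, the relation must read $Y|_q = \lambda X|_q$ for some non-real $\lambda = a + ib$ with $b \neq 0$. A direct coordinate computation (using $\re X = \tfrac12(X + \overline X)$) gives
\[
\re Y|_q = a\,\re X|_q - b\,\im X|_q,
\]
and the pointwise identity $J(\re X) = -\im X$, valid for any holomorphic vector field $X$ (where $J$ is the complex structure on $\CC{3}$), then shows that the real $2$-plane
\[
\Pi := \mathrm{span}_{\,\R}\{\re X|_q,\,\re Y|_q\} = \mathrm{span}_{\,\R}\{\re X|_q,\,J(\re X|_q)\}
\]
inside $T_qM$ is $J$-invariant --- i.e.\ a complex line.

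The next step is to integrate the real subalgebra $\mathfrak{h} := \mathrm{span}_{\,\R}\{X,Y\}\subset\g$ supplied by hypothesis~(ii). The vector fields $\re X, \re Y$ span an involutive real distribution, and its integral leaf $N\subset M$ through $q$ is a real $2$-dimensional orbit of the corresponding local group, on which $\mathfrak{h}$ acts transitively. Crucially, for a holomorphic $X = \sum f_j\,\partial/\partial z_j$ the real flow equation $\dot z = \re X$ reduces, in the holomorphic coordinate $z$, to the holomorphic system $\dot z_j = \tfrac12 f_j(z)$; hence its time-$t$ map is a local biholomorphism of $\CC{3}$ for every $t\in\R$. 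Consequently $\mathfrak{h}$ acts on $N$ by local biholomorphisms, which preserve the complex structure $J$.

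Because $J$-invariance of a $2$-plane is preserved under biholomorphic transport, and because $T_qN$ is already $J$-invariant by the previous step, transitivity of the $\mathfrak{h}$-action forces $T_sN$ to be a complex line at every $s\in N$. Thus $N$ is a complex $1$-dimensional submanifold of $M$. To finish, I would invoke the standard fact that a strictly pseudoconvex real hypersurface in $\CC{N}$ contains no complex analytic subset of positive dimension: for any $v\in T_s^{1,0}N\subset T_s^{1,0}M$, the defining function $\rho$ of $M$ restricts to zero on $N$, so the Levi form satisfies $\mathcal{L}(v,\overline v)=\partial\overline\partial\rho(v,\overline v)=0$, contradicting its definiteness. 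This contradiction establishes the proposition.

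The only slightly delicate point I anticipate is the verification that the real flow of $\re X$ is holomorphic in the spatial variable, which however follows at once from the holomorphy of the coefficients $f_j$; everything else in the plan is purely structural and I do not expect obstacles there.
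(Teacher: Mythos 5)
Your proof is correct and follows essentially the same route as the paper's: assume complex dependence at $q$, consider the orbit of the two-dimensional subalgebra $\mathrm{span}_{\,\R}\{X,Y\}$, use homogeneity to conclude it is a complex curve contained in $M$, and contradict strict pseudoconvexity. You merely make explicit the steps the paper leaves implicit, namely the $J$-invariance of the tangent plane at $q$ and the fact that the flows of $\re X,\re Y$ act by local biholomorphisms.
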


\begin{proof}
Assume, by contradiction, that $X,Y$ span a $1$-dimensional complex plane at $q$. Consider then the orbit at $q$ of the action of the above $2$-dimensional subalgebra spanned by $X,Y$ (denote the latter by $S$). We have $S\subset M$ and $T_q S=\mbox{span}_{\RR{}}\{X_q,Y_q\}$, so that by assumption the plane $T_qS$ is a $1$-dimensional complex plane and thus $S$ is a complex curve (since $S$ is homogeneous). Since $S\subset M$, this gives a contradiction with the strict pseudoconvexity of $M$, and proves the proposition.
\end{proof}

\begin{proposition}\Label{key3}
Let $X,Y,Z\in\g$ be three commuting vector fields which are linearly independent over $\RR{}$ at a point $q\in M$. Then
 $X,Y,Z$ are also linearly independent over $\CC{}$ at $q$.
\end{proposition}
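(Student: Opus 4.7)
The plan is to reduce to \autoref{key2} by isolating, inside the abelian subalgebra $\mathfrak a := \mbox{span}_{\,\RR{}}\{X, Y, Z\} \subset \g$, a pair of commuting vector fields in $\g$ whose values at $q$ are related by multiplication by $i$. To begin, note that any two among $X, Y, Z$ span a $2$-dimensional abelian subalgebra of $\g$ and are $\RR{}$-linearly independent at $q$; hence \autoref{key2} applies to every pair, and in particular $X_q, Y_q$ are $\CC{}$-linearly independent at $q$. Consequently, if $X_q, Y_q, Z_q$ were $\CC{}$-linearly dependent, their complex span would have complex dimension exactly $2$, so we could write
$$Z_q = \alpha X_q + \beta Y_q, \qquad \alpha,\beta\in\CC{}.$$

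Assume for contradiction that this is the case, and decompose $\alpha = a_1 + ia_2$, $\beta = b_1 + ib_2$ into real and imaginary parts. The $\RR{}$-independence of $X_q, Y_q, Z_q$ forces $(a_2, b_2) \neq (0, 0)$, since otherwise the real combination $Z - a_1 X - b_1 Y$ would vanish at $q$ while not being trivial. The key step is to form the real combinations
$$W := Z - a_1 X - b_1 Y, \qquad V := a_2 X + b_2 Y,$$
both of which lie in the abelian subalgebra $\mathfrak a \subset \g$ because the coefficients are real. A direct computation gives $W_q = i(a_2 X_q + b_2 Y_q) = iV_q$.

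It remains to apply \autoref{key2} to the pair $(V, W)$: they commute (both lying in $\mathfrak a$), and they are $\RR{}$-linearly independent at $q$ -- indeed $V_q \neq 0$ since $X_q, Y_q$ are $\RR{}$-independent and $(a_2, b_2) \neq 0$, and $W_q = iV_q$ is $\RR{}$-independent of $V_q$ because the complex structure has no real eigenvalues on nonzero vectors. Thus $\mbox{span}_{\,\RR{}}\{V, W\}$ is a $2$-dimensional abelian subalgebra of $\g$, and \autoref{key2} forces $V, W$ to be $\CC{}$-linearly independent at $q$ -- directly contradicting $W_q = iV_q$. The only delicate point, which dictates the shape of the argument, is that $\g$ is merely a real Lie algebra, so we cannot multiply its elements by $i$; the imaginary part of the dependence relation must therefore be extracted through the genuinely real combination $Z - a_1 X - b_1 Y$ rather than a more direct complex manipulation.
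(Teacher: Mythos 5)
Your proof is correct and follows essentially the same route as the paper: both arguments reduce to \autoref{key2} by producing two $\RR{}$-linearly independent, commuting fields $V,W$ in the abelian span of $X,Y,Z$ with $W_q=iV_q$. The only difference is that you extract this pair by explicit linear algebra on the dependence relation $Z_q=\alpha X_q+\beta Y_q$, whereas the paper obtains it geometrically from the fact that a $3$-dimensional real subspace of a $2$-dimensional complex space must contain a complex line; your version spells out the same fact more concretely.
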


\begin{proof}
Assume, by contradiction, that $\mbox{dim\,span}_{\,\CC{}}\,\{X_q,Y_q,Z_q\}=2$ (complex dimension $1$ is excluded since the real span has dimension $3$). Then the orbit of the $3$-dimensional abelian algebra $\mathfrak a$ spanned by $X,Y,Z$ is $3$-dimensional real manifold $N$ contained in a complex hypersurface $S$ (which is the orbit of the complex action of $X,Y,Z$ near $q$). This implies that $N$ has a $1$-dimensional complex tangent at $q$, which means that there exists $\RR{}$-linearly independent vector fields $U,V\in\mathfrak a$ with $V_q=iU_q$. Since $U,V$ form a $2$-dimensional abelian algebra,  the latter contradicts \autoref{key2}. This proves the proposition.









\end{proof}



\section{Proof of Theorem 1}

In this section, we apply \autoref{key2} and \autoref{key3} (read together with \autoref{generic}) to prove \autoref{theor1}. More precisely, we show that any Lie algebra from Mubarakazyanov's list (see Appendix A)  realized as a $5$-dimensional Lie algebra can act locally transitively by holomorphic transformations on a strictly pseudoconvex hypersurface $M\subset\CC{3}$ only if the latter is (up to a local biholomorphic equivalence) a tube over an affinely homogeneous strictly geometrically convex (resp. concave) hypersurface $S\subset\RR{3}$.

We use the set-up and notations of Section 2. 
Let us first observe the following important fact.

\begin{proposition}\Label{ideal}
In the notations and setting of Section 2, if the algebra $\g$ contains a $3$-dimensional abelian ideal $\mathfrak a$, then $M$ is biholomorphically equivalent (locally near $p$) to the tube over an affinely homogeneous strictly geometrically convex hypersurface $S\subset\RR{3}$.
\end{proposition}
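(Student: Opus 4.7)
The plan is to use the three-dimensional abelian ideal $\mathfrak{a}$ to produce a translation chart in which $M$ is necessarily a tube, and then to recognize the base as affinely homogeneous from the induced action of the two-dimensional quotient $\g/\mathfrak{a}$.

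First, let $Y_1, Y_2, Y_3$ be a basis of $\mathfrak{a}$. Since $M$ is simply homogeneous we have $\aut{}(M,p) = 0$, so no non-zero element of $\g$ vanishes at $p$; in particular $Y_1, Y_2, Y_3$ are $\RR{}$-linearly independent at $p$. Because they pairwise commute, \autoref{key3} upgrades this to $\CC{}$-linear independence at $p$. Simultaneous straightening of three commuting, $\CC{}$-independent holomorphic vector fields then produces local holomorphic coordinates $(z_1,z_2,z_3)$ near $p$ in which $Y_j = \partial/\partial z_j$. In this chart the real flow of $Y_j$ is the translation $z_j \mapsto z_j + t$, $t \in \RR{}$, so $M$ is preserved by real shifts in all three $x$-directions (writing $z_j = x_j + i y_j$). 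Consequently its defining function depends only on $y$, and $M = \RR{3} + iB$ with $B = \{y \in \RR{3} : iy \in M\}$, in agreement with the Convention of Section 2.

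To extract the affine structure on $B$, pick $X_4, X_5 \in \g$ whose images in $\g/\mathfrak{a}$ form a basis of that two-dimensional quotient. The ideal property gives $[X_\ell, Y_j] \in \mathfrak{a}$; in the straightened coordinates this says that $\partial f_k^{(\ell)}/\partial z_j$ is a real constant for all $j,k$, where $X_\ell = \sum_k f_k^{(\ell)}(z)\, \partial/\partial z_k$. Hence each $f_k^{(\ell)}$ is a complex affine function of $z$ with \emph{real} linear part,
\[
 f_k^{(\ell)}(z) = \sum_j a_{kj}^{(\ell)} z_j + c_k^{(\ell)}, \qquad a_{kj}^{(\ell)} \in \RR{}, \; c_k^{(\ell)} \in \CC{}.
\]
Absorbing $\re c_k^{(\ell)}$ into $\mathfrak{a}$, I may assume $c_k^{(\ell)} = i b_k^{(\ell)}$ with $b_k^{(\ell)} \in \RR{}$. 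A direct computation then shows that $\re X_\ell$ acts on any function $F(y)$ pulled back from $\RR{3}$ exactly as the affine vector field
\[
 \widetilde X_\ell \;=\; \sum_k \Bigl(\sum_j a_{kj}^{(\ell)} y_j + b_k^{(\ell)}\Bigr) \frac{\partial}{\partial y_k}
\]
does on $\RR{3}$. Tangency of $\re X_\ell$ to $M = \RR{3} + iB$ therefore becomes tangency of the affine field $\widetilde X_\ell$ to $B$. Since $\g/\mathfrak{a}$ acts transitively on the two-dimensional leaf space $M/\mathfrak{a} \simeq B$, the two fields $\widetilde X_4, \widetilde X_5$ span $T_y B$ at every point, and $B$ is affinely homogeneous.

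Finally, the equivalence between strict pseudoconvexity of a tube $\RR{3} + iB$ and the definiteness of the Hessian of a defining function of $B$ on $TB$ (i.e.\ strict geometric convexity or concavity of $B$) is standard, as the Levi form of the tube is, up to a positive scalar, this very Hessian restricted to $TB$. The main obstacle I anticipate is the bookkeeping in the third paragraph: verifying that the bracket condition $[X_\ell, Y_j] \in \mathfrak{a}$, where $\mathfrak{a}$ is a \emph{real} span of the $Y_j$, genuinely forces the linear-part coefficients $a_{kj}^{(\ell)}$ to be real rather than merely complex, and that the normalization $\re c_k^{(\ell)} = 0$ can be carried out consistently by choice of representatives of $X_4, X_5$ modulo $\mathfrak{a}$.
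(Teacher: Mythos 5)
Your proposal is correct and follows essentially the same route as the paper's proof: straighten the commuting ideal via Proposition \ref{key3}, use the ideal property to force the remaining two fields to be affine with real linear part and purely imaginary constants (after absorbing real constants into $\mathfrak a$), and read off the tube structure and the induced transitive affine action on the base. The point you flag as a possible obstacle is in fact immediate — $[\partial/\partial z_j, X_\ell]=\sum_k(\partial f_k^{(\ell)}/\partial z_j)\,\partial/\partial z_k$ lies in the \emph{real} span of the $\partial/\partial z_k$ only if each $\partial f_k^{(\ell)}/\partial z_j$ is a real-valued holomorphic function, hence a real constant — which is exactly the paper's argument.
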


\begin{proof}
Let us choose a basis for $\g$ in such a way that $\mathfrak a$ is spanned by $X_1,X_2,X_3$. According to \autoref{key3} (and \autoref{generic}), the vector fields $X_1,X_2,X_3$ can be assumed to have compelx rank $3$ at $p$. Hence there exists a biholomorphic coordinate change near $p$ mapping $p$ into the origin and $X_1,X_2,X_3$ onto $\z,\zz,\zzz$ respectively. Since $\mathfrak a$ is an ideal in $\g$, we have
$$\left[\frac{\partial}{\partial z_l},X\right]=\sum_{j=1}^3 \alpha_{lj}\frac{\partial}{\partial z_j}$$
for any $X\in\g$, where $\alpha_{lj}$ are real coefficients. Thus all the derivatives in $z_l$ of the components of $Y$ are real constants, and we conclude that  vector fields $X_4,X_5$ completing $X_1,X_2,X_3$ to a basis have the form:
\begin{equation}\Label{X45}
X_l=\sum_{j=1}^3 a_{lj}\frac{\partial}{\partial z_j}+Z\cdot B_l\cdot\frac{\partial}{\partial Z},\quad l=4,5.
\end{equation}
Here $a_{lj}$ are complex constants, $Z=(z_1,z_2,z_3)$, $\partial Z=\left(\z,\zz,\zzz\right)^T$, and $B_l$ are constant {\em real} $3\times 3$ matrices (in particular, $X_4,X_5$ are affine vector fields). By adding to $X_4,X_5$ appropriate real linear combinations of $X_1,X_2,X_3$, we further achieve $a_{lj}\in i\RR{}$. Moreover, the fact that $\mbox{rank}_{\,\RR{}}\,\{X_1,...,X_5\}=5$ at $0$ implies that the real rank of the matrix $\{a_{lj}\}$ equals $2$. All the latter precisely means that the orbit of $\g$ at $0$ (which coincides with $M$) is the tube over an affinely homogeneous surface $S$, which is in turn the orbit at $0$ of
$$\sum_{j=1}^3 \im a_{lj}\frac{\partial}{\partial y_j}+Y\cdot B_l\cdot{\partial Y},\quad l=4,5, \quad Y=\im Z.$$
This proves  the proposition.

\end{proof}

We now refer directly to Mubarakazyanov's list describe in Appendix A.

\smallskip

\noindent {\bf Case 1: decomposable solvable algebras.}
It is possible to see from Mubarakazyanov's classification (see the first table in Appendix A) that all the decomposable solvable Lie algebras, with the exeption of $\mathfrak m_{26}$, contain a $3$-dimensional abelian ideal.
 Applying now \autoref{ideal}, we  conclude that any possible stongly pseudoconvex orbit in Case 1 is  biholomorphic to the tube over an affinely homogeneous strictly geometrically convex hypersurface $S\subset\RR{3}$, with possibly the only exception of $$\g=\mathfrak m_{26}.$$
 The latter exceptional algebra shall be be treated separately. We claim that for this algebra there are no simply homogeneous strictly pseudoconvex orbits. Indeed, the nontrivial commuting relations for $\g$ are:
 $$[X_2,X_3]=X_1,\,\,[X_1,X_4]=2qX_1,\,\,[X_2,X_4]=qX_2-X_3,\,\,[X_3,X_4]=X_2+qX_3,\,\,q\geq 0.$$
 We introduce the vector fields
 $$X_2':=X_2+iX_3,\quad X_3':=X_2-iX_3.$$
 Then the nontrivial commuting relations involving $X_2,X_3$ turn into
 $$[X_2',X_3']=-2iX_1,\quad [X_2',X_4]=(q+i)X_2', \quad [X_3',X_4]=(q-i)X_3'.$$
 Note that both triples $X_1,X_2,X_5$ and $X_1,X_3,X_5$ form abelian subalgebras, thus both triples have complex rank $3$ at the reference point $p$ by \autoref{key3}. This implies that at least one of the triples $X_1,X_2',X_5$ and $X_1,X_3',X_5$ (say the first one) has complex rank $3$ at $p$. We then straighten the commuting vector fields near $p$ and get:
 $$X_1=\z,\quad X_2'=\zz, \quad X_5=\zzz.$$
 Now from the commuting relations of the remaining fields with $X_1,X_2',X_5$ we easily get:
 $$X_3'=(-2iz_2+c)\z+a\zz+b\zzz,\,\,X_4=2qz_1\z+(q+i)z_2\zz+d\zzz$$
 ($a,b,c,d$ - constants). Further, the commuting relation for $X_3',X_4$ gives (by considering the components $\z,\zz,\zz$ respectively):
 $$c=0,\quad a=0, \quad b=0.$$
 We finally get:
 $$X_2=-iz_2\z+\frac{1}{2}\zz,\quad X_3=z_2\z+\frac{1}{2i}\zz.$$
 We claim now that all orbit of the $5$-dimensional algebra $\g$ obtained above have an additional holomorphic symmetry and thus are not simply homogeneous. Indeed, consider the vector field
 $$Y:=iz_2\zz.$$
 It is easy to check that
 $$[Y,X_1]=[Y,X_4]=[Y,X_5]=0,\quad [Y,X_2]=X_3,\quad [Y,X_3]=-X_2.$$
 The latter means $[Y,\g]\subset\g$. At the same time, at points on the hypersurface $\Sigma:=\{z_2=0\}$ the algebra $\g$ has the full rank $5$, while the vector field $Y$ vanishes. This means that the orbits through these points of the algebras $\g$ and $\g\oplus\CC{}Y$ coincide. Since a generic orbit must intersect $\Sigma$, this proves that  all the orbits of $\g$ are invariant under the action of $Y\not\in\g,$ as required.

\smallskip

\noindent {\bf Case 2: Nondecomposable solvable algebras.} These algebras form the largest subset in Mubarakazyanov's list, and we step-by-step go through the list of algebras.

First, all the algebras in the ranges $g_1$ -- $g_{18}$, $g_{30}$ -- $g_{35}$, and $g_{38}$ -- $g_{39}$ contain the $3$-dimensional abelian ideal spanned by $e_1,e_2,e_3$. Next, all the algebras in the ranges $g_{19}$ - $-g_{24}$ and $g_{27}$ -- $g_{29}$ contain the $3$-dimensional abelian ideal spanned by $e_1,e_3,e_4$.
According to \autoref{ideal}, the orbits of all the mentioned algebras then appear to be locally biholomorphic to the tube over an affinely homogeneous hypersurface. We are left with the four exceptional algebras: $g_{25}, g_{26}, g_{36}, g_{37}$.

\smallskip

\noindent{\bf Subcase  $\g=g_{36}$}. In the latter case, $\g$ is realized by holomorphic in a neighborhood of $p$ vector fields $X_i,\,i=1,..,5$ with the following only nontrivial commuting relations:

$$[X_2,X_3]=X_1,\,\,[X_1,X_4]=X_1,\,\,[X_2,X_4]=X_2,\,\,[X_2,X_5]=-X_2,\,\,[X_3,X_5]=X_3.$$

According to \autoref{key2}, the complex rank at $p$ of $X_1,X_2$ equals $2$, thus these two vector fields can be simultaneously straightened near $p$:
$$X_1=\z,\,\,\,X_2=\zz.$$
Taking into account the commuting relations of $X_1,X_2$ with $X_3$, we conclude that in such coordinates $X_3$ has the form:
\begin{equation}\Label{h0}
X_3=(z_2+f(z_3))\z+g(z_3)\zz+h(z_3)\zzz.
\end{equation}
First, consider the situation  $h\equiv 0$. Then we do the variable change $z_2^*=z_2+f(z_3)$ and $X_3$ becomes
$$X_3= z_2\z+A(z_3)\zz.$$
We then work out $X_4$. The commuting relations with $X_1,X_2$ give:
\begin{equation}\Label{h00}
X_4=(z_1+f(z_3))\z+(z_2+g(z_3))\zz+h(z_3)\zzz.
\end{equation}
Note that $h\equiv$ is not possible for the latter identity, since otherwice $X_1,X_2,X_3,X_4$ span a subalgebra of vector fields non of which has the $\zzz$ component, thus their orbit at $p$ lies in $z_3=const$, hence it coincides with $z_3=const$ and we obtain a contradiction with the strict pseudoconvexity. In view of that, after possibly changing the base point $p$, we pay assume $h(z_3)\neq 0$ at $p$ in \eqref{h00} and straighten the vector field $h(z_3)\zzz$. This means
$$X_4=(z_1+f(z_3))\z+(z_2+g(z_3))\zz+\zzz.$$
A variable change $z_1^*=z_1+\psi(z_3)$ for appropriate $\psi$ allows to further make $f=0$ (this is accomplished by choosing $\psi$ such that $\psi'-\psi+f=0$). Using now $[X_3,X_4]=0$, we get first $g=0$ by considering the $\z$ component, and then $A-A'=0$ by considering the $\zzz$ component. This finally gives
$$X_3= z_2\z+\alpha e^{z_3}\zz, \quad X_4=z_1\z+z_2\zz+\zzz$$
($\alpha$ here is a constant).
Finally, consider the vector field
$$X_5':=2X_4+X_5.$$
We have $$[X_1,X_5']=2X_1, \,\,[X_2,X_5']=X_2,\,\,[X_3,X_5']=X_3.$$
The commuting relations with $X_1,X_2$ give
$$X_5'=(2z_1+f(z_3))\z+(z_2+g(z_3))\zz+h(z_3)\zzz.$$
Then the commuting relation with $X_3$ gives, by considering the $\zz$ component: $\alpha e^{z_3}h=0$, so either:

(i) $h=0$, so the vector fields $X_1,X_2,X_3,X_5'$ span a subalgebra of vector fields non of which has the $\zzz$ component, and repeating the argument above, we obtain a contradiction with the strict pseudoconvexity,

or

(ii) $\alpha=0$, and the vector fields $X_1,X_3$ provide a contradiction with \autoref{key2}.

This means that $h\not\equiv 0$ in \eqref{h0}. Shifting if necessary the base point $p$, we may assume $h(z_3)\neq 0$ at $p$ in \eqref{h0} and straighten the vector field $h(z_3)\zzz$.  Thus $X_3$ becomes:
$$X_3=(z_2+f(z_3))\z+g(z_3)\zz+\zzz.$$
Acting as above, we further find functions $\phi(z_3),\psi(z_3)$ such that the variable change
$$z_1^*=z_1+\phi(z_3),\quad z_2^*=z_2+\psi(z_3),\quad z_3^*=z_3$$ annihilates $f,g$, i.e. $X_3$ becomes
$$X_3=z_2\z+\zzz.$$
Next, for $X_4$ we use commuting relations with $X_1,X_2$ and get:
$$
X_4=(z_1+f(z_3))\z+(z_2+g(z_3))\zz+h(z_3)\zzz.$$
The commuting relation with $X_3$ implies (by considering the $\zzz$ component): $h'=0$, so $h=c$ (a constant). Similarly, for $X_5$ we get:
$$X_5=F(z_3)\z+(-z_2+G(z_3))\zz+H(z_3)\zzz.$$
Now the commuting relation with $X_3$ implies (by considering the $\zzz$ component): $H'=1$, so $H=z_3+C$ (a constant).
Finally, the commuting relation of $X_4$ and $X_5$ implies (by considering the $\zzz$ component): $c=0$.

We now consider the subalgebra spanned by $X_1,X_2,X_4$. Recall that
\begin{equation}\Label{ev1}
X_1=\z,\quad X_2=\zz,\quad X_4=(z_1+f(z_3))\z+(z_2+g(z_3))\zz.
\end{equation}
Integrating the action of \eqref{ev1} near $p$ gives the {\em flat} orbit
$$z_3=const, \quad \im (z_1- z_2)=const,$$ which contains, in particular, complex lines. This gives a contradiction with the strict pseudoconvexity.

We finally conclude that there are no strictly pseudoconvex orbits in the case $\g=g_{36}.$

\smallskip

\noindent{\bf Subcase  $\g=g_{25}$}.
Here the nontrivial commuting relations are
$$[X_2,X_3]=X_1,\,\,[X_1,X_5]=2qX_1,\,\,[X_2,X_5]=qX_2-X_3,\,\,[X_3,X_5]=X_2+qX_3,\,\,[X_4,X_5]=pX_4,\,\,p\neq 0.$$
Arguing very similarly to the case $\g=g_{4.9}\oplus g_1$, we conclude that in appropriate local holomorphic coordinates $\g$ can be represented as:
\begin{equation}\Label{nf25}
\begin{aligned}
&X_1=\z,\,\,X_2=-iz_2\z+\frac{1}{2}\zz,\,\, X_3=z_2\z+\frac{1}{2i}\zz,\,\,X_4=\zzz,\\
&X_5=2qz_1\z+(q+i)z_2\zz+qz_3\zzz.
\end{aligned}
\end{equation}
Introducing as above the vector field $$Y:=iz_2\zz,$$
 we check that
 $$[Y,X_1]=[Y,X_4]=[Y,X_5]=0,\quad [Y,X_2]=X_3,\quad [Y,X_3]=-X_2.$$
 The latter means $[Y,\g]\subset\g$, and arguing as in Case 1 we conclude that all the orbits of $\g$ are invariant under the action of $Y\not\in\g,$ so that the orbits of $\g$ are not simply homogeneous in the case $\g=g_{25}$.

\smallskip

\noindent{\bf Subcase  $\g=g_{26}$}.
Here the nontrivial commuting relations are
$$[X_2,X_3]=X_1,\,\,[X_1,X_5]=2qX_1,\,\,[X_2,X_5]=qX_2-X_3,\,\,[X_3,X_5]=X_2+qX_3,\,\,[X_4,X_5]=\epsilon X_1+2qX_4,$$
where $q\in\RR{},\,\,\epsilon=\pm 1.$
Arguing, again, very similarly to the case $\g=g_{4.9}\oplus g_1$, we conclude that in appropriate local holomorphic coordinates $\g$ can be represented as:
\begin{equation}\Label{nf26}
\begin{aligned}
&X_1=\z,\,\,X_2=-iz_2\z+\frac{1}{2}\zz,\,\, X_3=z_2\z+\frac{1}{2i}\zz,\,\,X_4=\zzz,\\
&X_5=(2qz_1+\epsilon z_3)\z+(q+i)z_2\zz+2qz_3\zzz.
\end{aligned}
\end{equation}
Introducing as above the vector field $$Y:=iz_2\zz,$$
 we check that
 $$[Y,X_1]=[Y,X_4]=[Y,X_5]=0,\quad [Y,X_2]=X_3,\quad [Y,X_3]=-X_2.$$
 The latter means $[Y,\g]\subset\g$, and  arguing as in Case 1 we conclude that all the orbits of $\g$ are invariant under the action of $Y\not\in\g,$ so that the orbits of $\g$ are not simply homogeneous in the case $\g=g_{26}$ as well.

\smallskip

\noindent{\bf Subcase  $\g=g_{37}$}. Here we have the following  nontrivial commutation relations:
$$[X_2,X_3]=X_1,\,\,[X_1,X_4]=2X_1,\,\,[X_2,X_4]=X_2,\,\,[X_3,X_4]=X_3,\,\,[X_2,X_5]=-X_3,\,\,[X_3,X_5]=X_2.$$
Using \eqref{key2}, we straighten $X_1,X_2$ so that:
$$X_1=\z,\,\,\,X_2=\zz.$$
Taking into account the commuting relations of $X_1,X_2$ with $X_3$, we conclude that in such coordinates $X_3$ has the form:
\begin{equation}\Label{xxx}
X_3=(z_2+f(z_3))\z+g(z_3)\zz+h(z_3)\zzz.
\end{equation}
Under the assumption $h\not\equiv 0$ in \eqref{xxx}, it is not difficult by arguing as above to further simplify $X_3$ to become:
$$
X_3=z_2\z+\zzz.
$$
After that, by using the commuting relations for $X_4$, $X_5$ in a straightforward manner as shown above, we compute that $X_4,X_5$ have the form:
$$X_4=(2z_1+Az_3)\z+  (z_2+A)\zz+z_3\zzz,$$
$$X_5=\left(\frac{1}{2}(z_3^2-z_2^2)+\frac{1}{2}A^2\right)\z+  z_3\zz-(z_2+A)\zzz$$
(after a shift in $z_1$). Here $A=a+bi$ is a complex constant. Further, it is convenient to shift $z_2$ by $A$ which finally gives:
$$X_3=(z_2-A)\z+\zzz,$$
$$X_4=(2z_1+Az_3)\z+  z_2\zz+z_3\zzz,$$
$$X_5=\left(\frac{1}{2}(z_3^2-z_2^2)+Az_2\right)\z+  z_3\zz-z_2\zzz,$$
and $X_1,X_2$ are as above.
It is straightforward to check then that the real parts of all these five vector fields are tangent to the 1-parameter family of real hyperquadrics
$$y_1=x_3(y_2-b)-ay_2+N(y_2^2+y_3^2).$$
The latter means that all the strictly pseudoconvex orbits of the algebra $\g$ are automatically {\em spherical} under the assumption $h\not\equiv 0$ in \eqref{xxx}.

If, otherwise, $h\equiv 0$ in \eqref{xxx}, we can claim that for the vector field
$$X_4=u\z+v\zz+w\zzz$$
we have $w(p)\neq 0$ (since otherwise the orbit at $p$ of the algebra spanned by $X_1,X_2,X_3,X_4$ has a $2$-dimensional complex tangent at $p$ and hence is a complex hypersurface itself). This allows us, by arguing as above, to simplify $X_4$ to be
$$X_4=2z_1\z+z_2\zz+\zzz$$
(while $X_1,X_2$ stay the same)
It is convenient for us now to do the substitution $z_3^*=e^{z_3}$ which turns $X_4$ into
$$X_4=2z_1\z+z_2\zz+z_3\zzz.$$
Having $X_1,X_2,X_3$ normalized as above, it is straightforward to compute by using the commuting relations for $X_3,X_5$ that the latter two vector fields look as
$$X_3=z_2\z+A\zz,$$
$$X_5=(Bz_3^2-\frac{1}{2}z_2^2)\z+Cz_2\zz+Dz_3\zzz.$$
Finally, by quadratic changes of variables we are able to simplify $X_5$ to be
$$X_5=Cz_2\zz+Dz_3\zzz$$
keeping the other vector fields unchanged.

We now use a similar idea to that for the algebras $g_{25},g_{26}$ and notice that the vector field
$$Y:=z_2\zz$$
satisfies $$[\g^{\CC{}},Y]\subset \g^{\CC{}}$$
(where $\g^{\CC{}}$ is the complexified algebra of holomorphic vector fields). By considering orbits through points with $z_2=0$ and generic $z_1,z_3$ (where the algebra $\g$ has the full rank $5$), we see that   the external complexification $M^{\CC{}}\subset\CC{3}\times\overline{\CC{}}$ of $M$ has an infinitesimal automorphism algebra of dimension at least $6$. Since $\g$ is a real form of $\g^{\CC{}}$, we conclude finally that $M$ is not simply homogeneous.

In view of that, the algebra $g_{37}$ does not have any simply homogeneous
strictly pseudoconvex orbits.

\smallskip

\noindent {\bf Case 3: decomposable nonsolvable algebras.} The latter case occurs  when the minimal decomposition of $\g$ contains a $3$-dimensional simple term.

We start with the situation when the minimal decomposition of $\g$ contains $3$ terms, i.e. $\g$ is the sum of a $3$-dimensional simple term $\mathfrak{a}$ and a $2$-dimensional abelian term $\mathfrak b$. Fix a reference point $p$ on the orbit and consider the intersection of $\mathfrak a$ evaluated at $p$ with the complexification of $\mathfrak b$ evaluated at $p$. In view of dimension reasons, such intersection is nonempty, hence there exists $W\in\mathfrak a$ such that the complex rank of $\CC{}W\oplus\mathfrak b$ at $p$ equals $2$. On the other hand,   $\CC{}W\oplus\mathfrak b$ is abelian, and we obtain a contradiction with \autoref{key3}.

We conclude finally that the only possibility of obtaining a strictly pseudoconvex orbit is the following: $\g$ is the sum of   a $3$-dimensional simple term $\mathfrak{a}$ and the $2$-dimensional nonabelian term $\mathfrak g_2$. This leads to the following two subcases.

\smallskip

\noindent{\bf Subcase  $\g=\mathfrak m_{16}=\mathfrak{su}(1,1)\oplus\g_2$}. In this case, we have the following commutation relations:
$$[X_1,X_2]=X_1,\,\,[X_1,X_3]=2X_2,\,\,[X_2,X_3]=X_3,\,\,[X_4,X_5]=X_4.$$
Using \eqref{key2}, we straighten $X_3,X_4$ so that:
$$X_3=\z,\,\,\,X_4=\zz.$$
Taking into account the commuting relations of $X_3,X_4$ with $X_5$, we conclude that  $X_5$ has the form:
\begin{equation}\Label{XXX}
X_5=f(z_3)\z+(z_2+g(z_3))\zz+h(z_3)\zzz.
\end{equation}
We claim that $h\not\equiv 0$ in \eqref{XXX}. Indeed, arguing by contradiction, we consider first $[X_1,X_4]=0$ and conclude that components of $X_1$ do not depend on $z_2$. Next, we consider the last component of the identity $[X_1,X_5]=0$ and conclude that $f\cdot H_{z_1}=0$ (if $X_1=F\z+G\zz+H\zzz$), so that $H_{z_1}\equiv 0$ ($f\equiv=0$ is not possible, since then $X_5$ is a product of $X_4$ with a holomorphic function, which implies the holomorphic degeneracy of the orbit). Finally, considering the last component in $[X_1,X_3]=2X_2$, we conclude that the last component of $X_2$ vanishes identically. All together, this implies that the for the four vector fields $X_2,X_3,X_4,X_5$ (forming a subalgebra in $\g$) their last component is zero, thus the orbit of the subalgebra at every point is a complex plane, which is a contradiction with the Levi-degeneracy of the orbit. This prove the claim

In this way, we assume $h\not\equiv 0$ in \eqref{XXX}, and then move to a nearby point in $M$ in order to have $h(p)\neq 0$. It is not difficult then, by arguing as above, to further simplify $X_5$ to become:
$$
X_5=z_2\zz+\zzz.
$$
Further, the substitution $z_3^*=e^{z_3}$ makes
$$
X_5=z_2\zz+z_3\zzz.
$$
After that, considering the commutators of $X_2$ with $X_3,X_4$ and $X_5$, it is not difficult to get:
\begin{equation}\Label{XX}
X_2=-z_1\z+Az_3\zz+Bz_3\zzz.
\end{equation}
In case $B\neq 0$ in \eqref{XX}, by mean of a linear substitution (the non-identical part of which has the form $z_2\mapsto z_2+\alpha z_3$), it is possible to further achieve
$$X_2=-z_1\z+Bz_3\zzz$$
(preserving the form of the other vector fields).
Finally, we figure out the form of $X_1$. Commutation relations of $X_1$ with $X_4,X_3,X_5$ and $X_2$ respectively yield (after a straightforward calculation):
$$X_1=z_1^2\z+Cz_3\zz-2Bz_1z_3\zzz,$$
where $$C(B+1)=0.$$

In case $C=0$, we observe that all the vector fields $$Y_\lambda:=\lambda z_3\zzz,\quad\lambda\in\CC{}$$ commute with $\g$. For an appropriate choice of $\lambda$, the value of $Y_\lambda$ at $p$ must lie in $T_p M$ (recall that $z_3\neq 0$ at $p$). This means that $M$ is invariant under the action of $\g\oplus \CC{}Y_\lambda$, so that $M$ is not simply homogeneous.

In case $C\neq 0,\,B=-1$, we observe the following. The linear map
$$\sigma(z_1,z_2,z_3):=(z_2,z_1,z_3)$$
preserves the subalgebra spanned by $X_1,X_2,X_3,X_4$, while the vector field $X_5$ becomes
$$X_1':=Cz_3\z+ z_2^2\zz+2z_2z_3\zzz.$$
It is straightforward to check then that we have $[X_1',\g]\subset\g\oplus\CC{}X_1'$. Then we argue as above and note that, for example, at points with $z_2=0,\,z_1\neq 0$ the algebra $\g$ has full rank while the value of $X_1'$ at $p$ lies in the span of $X_1,..,X_5$, so that the orbits at such points are invariant under  the action of $\g\oplus \CC{}X_1$ and $X_1$ is thus an additional infinitesimal automorphism. The latter applies, by uniqueness, to all the orbits. We again conclude that neither of the orbits is simply homegeneous.

Finally, in the case $B= 0$ in \eqref{XX}, we may replace $X_2$ by $X_2+X_5$ and then argue identically to the case $B\neq 0$ to simplify $X_2$.  Since $[X_1,X_2+X_5]=[X_1,X_2]$, we get an identical representation to the above for $X_1$ and again conclude, that the orbits are not simply homogeneous.

\smallskip

\noindent{\bf Subcase  $\g=\mathfrak m_{17}=\mathfrak{su}(2)\oplus\g_2$}. In this case, we have the following nontrivial commutation relations:
$$[X_1,X_2]=X_3,\,\,[X_1,X_3]=-X_2,\,\,[X_2,X_3]=X_1,\,\,[X_4,X_5]=X_4.$$
Let us introduce the new vector fields
$$X_1':=X_1+iX_3,\quad X_3':=X_1-iX_3.$$
Then the respective modified (nontrivial) commutation relations are:
$$[X_1',X_3']=2iX_2,\,\,[X_1',X_2]=-iX_1',\,\,[X_2,X_3']=-iX_3',\,\,[X_4,X_5]=X_4.$$
We observe that the modified commutation relations are very similar to that for $\g=\mathfrak{su}(1,1)\oplus\g_2$. In accordance with that, arguing identically to the case $\g=\mathfrak{su}(1,1)\oplus\g_2$, we can simplify $\g$ in order that the (modified) basic vector fields look as:
$$X_1'=-z_1^2\z+Cz_3\zz-2iBz_1z_3\zzz, \quad X_2=iz_1\z+Bz_3\zzz,$$
$$X_3'=\z,\quad X_4=\zz,\quad X_5=z_2\zz+z_3\zzz,$$
where the complex constants $B,C,\,B\neq 0$ satisfy:
$$C(B-i)=0.$$

In case $C=0$, the basic vector fields finally look as
$$2X_1=(1-z_1^2)\z-2iBz_1z_3\zzz, \quad X_2=iz_1\z+Bz_3\zzz,$$
$$2X_3=i(1+z_1^2)\z-2Bz_1z_3\zzz,\quad X_4=\zz,\quad X_5=z_2\zz+z_3\zzz.$$
Arguing now again identically to the situation of $\g=\mathfrak{su}(1,1)\oplus\g_2$ and employing the vector field $$Y_\lambda:=\lambda z_3\zzz,\quad\lambda\in\CC{},$$ we similarly conclude that the orbits in this case are not simply homogeneous.

In case $C\neq 0,\,B=i$, the basic vector fields finally look as
$$2X_1=(1-z_1^2)\z+Cz_3\zz+2z_1z_3\zzz, \quad X_2=iz_1\z+Bz_3\zzz,$$
$$2X_3=i(1+z_1^2)\z-iCz_3\zz-2iz_1z_3\zzz,\quad X_4=\zz,\quad X_5=z_2\zz+z_3\zzz.$$
We claim that the Levi-nondegenerate orbits of the latter algebra are {\em not} strictly pseudoconvex. For that, we first note that at all points in $\CC{3}$ with $z_1=z_2=0,\,z_3\neq 0$, the real rank of the vector fields $X_1,...,X_5$ is $5$. This means that a generic orbit of $\g$ intersects the complex line
$$L:=\{z_1=z_2=0\}.$$
At the same time, at all points in $L$ the values of the commuting vector fields $X_2$ and $X_5$ are linearly dependent over $\CC{}$. The latter contradicts \autoref{key2} and proves that neither of the orbits is strictly pseudoconvex.

 \smallskip

\noindent {\bf Case 4: nondecomposable nonsolvable algebras.} According to Mubarakazyanov's classification, there is a unique nondecomposable nonsolvable $5$-dimensional Lie algebra, namely $\mathfrak g_5$ (see Appendix A). This particular algebra and its orbits in $\CC{3}$ were considered in the recent paper \cite{atanovloboda} of Atanov-Loboda, and the outcome of Case 4 can be read from the latter paper. However, we provide, for completeness, an alternative proof here.

  Nontrivial commutation relations in $\g=\g_5$ look as:
\begin{equation}\Label{g5}
\begin{aligned}
&[X_1,X_2]=2X_1,\,\,[X_1,X_3]=-X_2,\,\,[X_2,X_3]=2X_3,\,\,[X_1,X_4]=X_5,
\\
&[X_2,X_4]=X_4,\,\,[X_2,X_5]=-X_5,\,\,[X_3,X_5]=X_4.
\end{aligned}
\end{equation}
According to \eqref{key2}, we can straighten the commuting vector fields $X_1$ and $X_5$, so that $$X_1=\z,\,\,\,X_5=\zzz.$$
Using the commutation relations for $X_4$ with $X_1,X_5$, we get:
\begin{equation}\Label{xxxx}
X_4=f(z_2)\z+g(z_2)\zz+(z_1+h(z_2))\zzz.
\end{equation}
Similarly, for $X_2$ we get:
\begin{equation}\Label{xx}
X_2=(2z_1+u(z_2))\z+v(z_2)\zz+(z_3+w(z_2))\zzz
\end{equation}
(here $f,g,h,u,v,w$ are all holomorphic functions depending on $z_2$ only near the reference point $p=(p_1,p_2,p_3)$). Obviously, the identity $g(p_2)=v(p_2)=0$ is not possible, since then the real span of the subalgebra spanned by $X_1,X_2,X_4,X_5$ at $p$ is  the complex $2$-plane $z_2=const$, so that the orbit of this subalgebra is a complex surface, which is a contradiction. We now come to a case distinction.

Assume first that $g(p_2)\neq 0$, and then perform a transformation in $z_2$ only straightening the vector field $g(z_2)\zz$. In this way, the form of $X_1,X_5,X_2$ remains the same, while $X_4$ simplifies to
$$X_4=f(z_2)\z+\zz+(z_1+h(z_2))\zzz$$
(the coefficient functions $f,h,u,v,w$ possibly change). Next, we perform a variable change
$$z_1^*=z_1+\phi(z_2),\quad z_2^*=z_2, \quad z_3^*=z_3+ \psi(z_2).$$
Then it is not difficult to compute that, choosing $\phi,\psi$ as solutions of the system of ODEs $$f+\phi'=0,\quad h+\psi'-\phi=0,$$
we get finally: $$X_4=\zz+z_1\zzz.$$
Using now $[X_2,X_4]=X_4$, it is not difficult to obtain:
$$X_2=(2z_1+A)\z+(B-z_2)\zz+(z_3+Az_2+C)\zzz$$
for some constants $A,B,C$. A shift in $z_2$ allows us to assume further $B=0$.

It remains finally to use the three nontrivial commutation relations containing $X_3$ in \eqref{g5}. A straightforward calculation (the details of which we leave to the reader) give then, first of all, $$A=C=0,$$ and second:
$$X_3=-z_1^2\z+(z_1z_2-z_3)\zz-z_1z_3\zzz.$$
In this way, the initial algebra \eqref{g5} of holomorphic vector field can be brought to the unique normal form given by the above formulas for $X_1,...x_5$.

It remains to integrate the normal form. If $M$ is the orbit of it at some point, then the tangency with $X_1,X_5$ gives that $M$ is given by an equation
$$y_3=F(y_1,x_2,y_2), \quad z_j=x_j+iy_j.$$
The tangency with the three remaining vector fields give the following system of PDEs for $F$:
$$
   2 y_1 \frac{\partial F}{\partial y_1} - x_2 \frac{\partial F}{\partial x_2} - y_2 \frac{\partial F}{\partial y_2} - F = 0,
$$
\begin{equation}\label{system}
   -2 x_1 y_1 \frac{\partial F}{\partial y_1} + (x_1 x_2 - y_1 y_2 - x_3) \frac{\partial F}{\partial x_2} +
       ( x_1 y_2 + x_2 y_1 - F) \frac{\partial F}{\partial y_2}  + (x_1 F + y_1 x_3) = 0,
\end{equation}
$$
    \frac{\partial F}{\partial x_2} - y_1  = 0.
$$
Using the first and the third equations in \eqref{system}, we can simplify the second equation to
\begin{equation}\label{second}
    - y_1^2 y_2  +
       ( x_2 y_1 - F) \frac{\partial F}{\partial y_2}  = 0.
\end{equation}
The third equation in the system yields
$$
   F (y_1, x_2, y_2) = x_2 y_1 + G(y_1, y_2).
$$
Substituting this into the third equation in \eqref{system}, we get
$$GG_{y_2}+y_1^2y_2=0,$$
so that \begin{equation}\label{G2}
G^2=-y_1^2y_2^2+H(y_1).
\end{equation}
Finally, substituting the latter into the first equation in \eqref{system}, we obtain:
$$2y_1G_{y_1}-y_2G_{y_2}-G=0.$$
After multiplying by $G$, by using \eqref{G2}, we get $y_1H'=H$ and so
$$H(y_1)=\alpha y_1,\,\,\alpha\in\RR{*}, \quad F=x_2y_1\pm\sqrt{\alpha y_1-y_1^2y_2^2}.$$
In view of that, the orbit $M$ is an open subset of the real-analytic set
\begin{equation}\label{analset}
(y_3-x_2y_1)^2+y_1^2y_2^2=\alpha y_1.
\end{equation}
It is not difficult to compute that the smooth part of \eqref{analset} is Levi-indefinite for $\alpha\neq 0$, and so is $M$.

If, otherwise, $g(p_2)=0$ in \eqref{xxxx}, we either change the base point $p$ and arrive to the previous case $g(p_2)\neq 0$, or have $g\equiv 0$. In the latter case we conclude, as discussed above, that $v(p_2)\neq 0$ in \eqref{xx}. Arguing as above, we normalize the vector field $X_2$ to become:
$$X_2=2z_1\z+\zz+z_3\zzz.$$
Further, we make use of the substitution $z_2^*=e^{z_2}$ and get:
$$X_2=2z_1\z+\zz+z_3\zzz.$$
Using now the commutation relations for $X_4$ and the fact that $g\equiv 0$ in \eqref{xxxx}, it is straightforward to compute that:
$$X_4=Az_2^3\z+(z_1+Cz_2^2)\zzz.$$
It remains to use the commutation relations for $X_3$. The commutators with $X_1$ and $X_3$ respectively give
\begin{equation}\label{2comm}
\frac{\partial}{\partial z_1}X_3=-X_2,\quad \frac{\partial}{\partial z_1}X_3=-X_2=-X_4.
\end{equation}
Considering finally $[X_2,X_3]=2X_3$, taking the first component of the latter identity and taking \eqref{2comm} into account, it is not difficult to obtain $A=0$. The latter means that the algebra $\g$ contain simultaneously the vector fields $X_5$ and the proportional to it vector field $X_4=(z_1+Cz_2^2)X_5$, which immediately implies the holomorphic degeneracy of the orbit $M$.

We summarize by concluding that there are {\em no} strictly pseudoconvex orbits in the case $\g=g_5$.

\bigskip

We have gone through the entire list of algebras in Mubarakazyanov's classification. Putting together the outcomes in all of the cases above finally proves \autoref{theor1}.

\qed

\section{The classification}
Upon completing the proof of \autoref{theor1}, we are finally able to provide the complete classification of locally homogeneous strictly pseudoconvex hypersurfaces in $\CC{3}$. We would need first of all the following proposition helping to distinguish between two tubular hypersurfaces in our list.

\begin{proposition}\Label{disting}
Let $M_1,M_2\subset\CC{3}$ be two tubular hypersurfaces over affinely homogeneous bases $B_1,B_2$, respectively. Assume further that $M_1,M_2$ are simply homogeneous and that  the  abelian ideal $I$  spanned by the real shifts $i\frac{\partial}{\partial z_j},\,j=1,2,3,$ is the unique $3$-dimensional abelian ideal in both $\g_1$ and $\g_2$. Then $M_1,M_2$ are biholomorphic at some points if and only if their bases are affinely equivalent.
\end{proposition}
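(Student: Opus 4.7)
The plan is to handle the two directions of the equivalence separately, with essentially all the work in the forward direction.

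The direction $(\Leftarrow)$ is immediate: any affine equivalence $L(x)=Ax+c$ between $B_1$ and $B_2$, with $A\in GL_3(\RR{})$ and $c\in\RR{3}$, extends tautologically to the complex-affine biholomorphism $\widetilde L(z):=Az+c$ of $\CC{3}$. Since $A$ is real, $\widetilde L(x+iy)=(Ax+c)+i(Ay)$ sends the tube $M_1=B_1+i\RR{3}$ locally onto $M_2=B_2+i\RR{3}$.

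For $(\Rightarrow)$, let $\Phi:(\CC{3},p)\lr(\CC{3},q)$ be a local biholomorphism carrying the germ of $M_1$ to the germ of $M_2$. By the set-up of Section 2 we have $\hol{}(M_i,p)=\g_i$ of dimension $5$, and the push-forward $\Phi_*$ induces a Lie algebra isomorphism $\g_1\lr\g_2$. Since Lie algebra isomorphisms preserve ideals, abelianness, and dimension, the uniqueness hypothesis forces $\Phi_*(I_1)=I_2$. Explicitly, write $\Phi_*\bigl(i\,\partial/\partial z_j\bigr)=\sum_{k=1}^{3} c_{jk}\,i\,(\partial/\partial z_k)$ with real constants $c_{jk}$; comparing with the standard push-forward formula for a biholomorphism, one reads off that every holomorphic partial $\partial\Phi_l/\partial z_j$ is identically equal to the real constant $c_{jl}$. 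Integrating, $\Phi(z)=Az+b$ with $A\in GL_3(\RR{})$ and $b\in\CC{3}$; thus $\Phi$ is complex-affine with a real linear part.

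Setting $b=b_r+ib_i$ and decomposing $\Phi(x+iy)=(Ax+b_r)+i(Ay+b_i)$, the condition $\Phi(M_1)\subset M_2$ reduces to $Ax+b_r\in B_2$ whenever $x\in B_1$ lies close to $\re p$, since the imaginary component $Ay+b_i$ ranges freely over $\RR{3}$. Hence $L(x):=Ax+b_r$ is a local affine equivalence between germs of $B_1$ and $B_2$; by the affine homogeneity of the two bases this local equivalence globalizes in the obvious way, yielding an affine equivalence of $B_1$ and $B_2$.

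The main obstacle is the key step in the second paragraph: the argument that $\Phi_*(I_1)=I_2$ genuinely forces $\Phi$ to be affine with a \emph{real} linear part. This hinges on $I_i$ being spanned over $\RR{}$ by the constant-coefficient vector fields $i\,\partial/\partial z_k$, whose real parts are precisely the generators of imaginary translation of the tube; this rigidity of the shift ideal is what compels the push-forward of constant-coefficient fields to be again constant-coefficient with real coefficients. The uniqueness assumption on the $3$-dimensional abelian ideal is what identifies $I_i$ canonically inside $\g_i$ and allows the Lie algebra isomorphism to pick them out intrinsically.
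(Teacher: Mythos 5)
Your proposal is correct and follows essentially the same route as the paper: both directions identify the shift ideal via the uniqueness hypothesis, deduce that the biholomorphism has real constant holomorphic Jacobian entries and is hence affine with real linear part, and then read off the induced affine equivalence of the bases. Your write-up merely makes explicit the decomposition into real and imaginary parts that the paper summarizes as "combining with the shifts."
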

\begin{proof}
Assume first there exists  a biholomorphism $H=(f_1,f_2,f_3):\,(M_1,p_1)\lr(M_2,p_2)$. Then, in view of the simple homogeneity, $\g_1$ is mapped onto $\g_2$, and in view of the uniqueness $I$ is mapped into itself. Writing down the fact that the derivations  $\frac{\partial}{\partial z_j},\,j=1,2,3$ are mapped onto (constant) real linear combinations of themselves, we easily conclude that
$\frac{\partial f_k}{\partial z_j}$ are all real constants, so that $H$ is an affine map with a real linear part. Combining with the shifts, we finally get that $H$ is a real affine map. Such a map transforms the bases $B_1,B_2$ onto each other, as follows from the definition of tubular hypersurfaces.

On the other hand, (the complexification of) a real affine map between bases obviously performs an affine equivalence of the tubular manifolds. This proves the proposition.
\end{proof}

\begin{proof}[Proof of \autoref{theor2}]
Applying the results of \cite{lob2dim,lob1dim,dmt}, we can conclude that any locally homogeneous strictly pseudoconvex hypersurface $M\subset\CC{3}$ with $\mbox{dim}\,\mathfrak{aut}\,(M,p)>0$ is locally biholomorphic to one of the hypersurfaces   1) -- 11) considered near a strictly pseudoconvex point in it, and that any two hypersurfaces in the list 1) - 11) are locally biholomorphically inequivalent. In the case $\mbox{dim}\,\mathfrak{aut}\,(M,p)=0$, we apply \autoref{theor1} and conclude that $M$ is locally biholomorphic to the tube over an affinely homogeneous surface in $\RR{3}$. The latter surafces are  classified (locally) by Doubrov-Komrakov-Rabinovich in \cite{dkr} and independently by Ejov-Eastwood in \cite{ee}, up to an affine equivalence. Recall also that, according to \autoref{disting}, the holomorphic classification in the simply homogeneous case is reduced to the affine classification, provided the $3$-dimensional abelian ideal $I\subset\g$ is unique.

Next, note that a tube over a surface in $\RR{3}$ is strictly pseudoconvex iff its base if strictly affinely convex (resp. strictly affinely concave).
Now a straightforward calculation of the second fundamental form for the surfaces in the list in \cite{dkr} allows to exclude from the list in \cite{dkr} all the surfaces violating the strong convexity (resp. strong concavity) condition.

Further, for the resulting list of real hypersurfaces, we exclude those showing up in the lists of hypersurfaces with $\mbox{dim}\,\mathfrak{aut}\,(M,p)>0$ obtained in \cite{lob2dim,lob1dim,dmt}.
This finally gives the list of hypersurfaces 12) -- 17) and proves that any locally homogeneous strcitly pseudoconvex hypersurfaces in $\CC{3}$ is locally equivalent to one of the hypersurfaces 1) -- 17).

As the next step, we need to show that   all the hypersurfaces 12) - 17) indeed have a trivial stability algebra. For doing so, we first note that the family 16) was studied by Beloshapka-Kossovskiy  in \cite{C3} and it was proved there that all the Levi-nondegenerate hypersurfaces in the family have a trivial stabilizer. For hypersurfaces 12) -- 15) and 17), we have to compute the coefficient tensors $N_{22}(0),N_{23}(0)$ in the Chern-Moser normal form  at a strictly pseudoconvex point. As shown in \cite{lob2dim}, a necessary condition for the triviality of the stabilizer is the fact that, in any Chern-Moser normal form, we have $$N_{22}(0)\neq E_0:=|z_1|^4 - 4|z_1|^2 |z_2|^2 + |z_1|^4.$$
A computation employing the MAPLE package shows that, for hypersurfaces 13), 14) and 17), we have $N_{22}(0)\neq E_0$ in any normal form, that is why the latter hypersurfaces have a trivial stabilizer. Next, for hypersurfaces 15) with $\alpha\neq 4$, we similarly have  $N_{22}(0)\neq E_0$ in any normal form, so that the respective stabilizer is trivial. However, for $\alpha=4$, we have $N_{22}(0)= E_0$ in the special normal form, and one has to analyze the tensor $N_{23}(0)$. Not going into further technical details, we again employ the MAPLE package and the results in \cite{lob2dim} and conclude that the tensor $N_{23}(0)$ in the case under discussion contains components contradicting the nontriviality of the stabilizer. Similar situation occurs  for hypersurfaces 12) with $\alpha=\beta=-1$. Namely, we have   $N_{22}(0)= E_0$ in some normal form, while further computations employing the MAPLE package show that the tensor $N_{23}(0)$ contains components contradicting the nontriviality of the stabilizer. In contrast, for $(\alpha,\beta)\neq (-1,-1)$, we have $N_{22}(0)\neq E_0$ in any normal form (by employing MAPLE computations). This finally proves that all hypersurfaces 12) -- 17) have a trivial stibilizer.

It remains to prove that  hypersurfaces 12) -- 17) are all pairwise locally holomorphically inequivalent. Indeed,
it follows directly from the explicit description in \cite{dkr} of the $2$-dimensional affine Lie algebras $\mathfrak a$ acting  on the bases of the surfaces 12) -- 17) that, in each case, $I$ is the unique $3$-dimensional abelian ideal in the Lie algebra $\g$ freely acting on a hypersurface (note that $\g$  equals, as a linear space, to $I\oplus\mathfrak a$). Hence \autoref{disting} is applicable, the equivalence problem is reduced to the affine equivalence problem, and it remains to finally show that the bases of the tubular hypersurfaces 12) -- 17) are pairwise affinely inequivalent. But the latter is accomplished by an elementary 
computation the details of which we leave to the reader. (Alternatively, one can appeal again to the simple homogeneity of the hypersurfaces 12) -- 17) and check that distinct hypersurfaces correspond to distinct algebras in Mubarakazyanov's list; the simple homogeneity now means that any two hypersurfaces as above are locally biholomorphically inequivalent since their automorphism algebras are non-isomorphic). 

The theorem is completely proved now.
\end{proof}

\newpage

\section{Appendix A: Mubarakazyanov's classification of $5$-dimensional real Lie algebras}

\mbox{}

\begin{center} \bf Decomposable $5$-dimensional real Lie algebras \end{center}

\medskip

\begin{equation*}
\begin{array}{|c|c|c|c|c|c|c|c|c|c|c|c|}
\hline
\rotatebox[origin=c]{90}{\mbox{ Алгебры }} & [e_1,e_2] & [e_1,e_3] & [e_1,e_4] & [e_1,e_5] & [e_2,e_3] & [e_2,e_4] & [e_2,e_5] & [e_3,e_4] & [e_3,e_5] & [e_4,e_5] \\ \hline
\mathfrak{m}_{1} & \, & \, & \, & \, & \, & \, & \, & \, & \, & \, \\ \hline
\mathfrak{m}_{2} & e_1 & \, & \, & \, & \, & \, & \, & \, & \, & \, \\ \hline
\mathfrak{m}_{3} & e_1 & \, & \, & \, & \, & \, & \, & e_3 & \, & \, \\ \hline
\mathfrak{m}_{4} & \, & \, & \, & \, & e_1 & \, & \, & \, & \, & \, \\ \hline
\mathfrak{m}_{5} & \, & e_1 & \, & \, & e_1 + e_2 & \, & \, & \, & \, & \, \\ \hline
\mathfrak{m}_{6} & \, & e_1 & \, & \, & e_2 & \, & \, & \, & \, & \, \\ \hline
\mathfrak{m}_{7} & \, & e_1 & \, & \, & he_2 & \, & \, & \, & \, & \, \\ \hline
\mathfrak{m}_{8} & \, & pe_1 - e_2 & \, & \, & e_1 + pe_2 & \, & \, & \, & \, & \, \\ \hline
\mathfrak{m}_{9} & e_1 & 2e_2 & \, & \, & e_3 & \, & \, & \, & \, & \, \\ \hline
\mathfrak{m}_{10} & e_3 & - e_2 & \, & \, & e_1 & \, & \, & \, & \, & \, \\ \hline
\mathfrak{m}_{11} & \, & \, & \, & \, & e_1 & \, & \, & \, & \, & e_4 \\ \hline
\mathfrak{m}_{12} & \, & e_1 & \, & \, & e_1 + e_2 & \, & \, & \, & \, & e_4 \\ \hline
\mathfrak{m}_{13} & \, & e_1 & \, & \, & e_2 & \, & \, & \, & \, & e_4 \\ \hline
\mathfrak{m}_{14} & \, & e_1 & \, & \, & he_2 & \, & \, & \, & \, & e_4 \\ \hline
\mathfrak{m}_{15} & \, & pe_1 - e_2 & \, & \, & e_1 + pe_2 & \, & \, & \, & \, & e_4 \\ \hline
\mathfrak{m}_{16} & e_1 & 2e_2 & \, & \, & e_3 & \, & \, & \, & \, & e_4 \\ \hline
\mathfrak{m}_{17} & e_3 & - e_2 & \, & \, & e_1 & \, & \, & \, & \, & e_4 \\ \hline
\mathfrak{m}_{18} & \, & \, & \, & \, & \, & e_1 & \, & e_2 & \, & \, \\ \hline
\mathfrak{m}_{19} & \, & \, & \alpha e_1 & \, & \, & e_2 & \, & e_2 + e_3 & \, & \, \\ \hline
\mathfrak{m}_{20} & \, & \, & e_1 & \, & \, & \, & \, & e_2 & \, & \, \\ \hline
\mathfrak{m}_{21} & \, & \, & e_1 & \, & \, & e_1 + e_2 & \, & e_2 + e_3 & \, & \, \\ \hline
\mathfrak{m}_{22} & \, & \, & e_1 & \, & \, & \beta e_2 & \, & \gamma e_3 & \, & \, \\ \hline
\mathfrak{m}_{23} & \, & \, & \alpha e_1 & \, & \, & pe_2 - e_3 & \, & e_2 + pe_3 & \, & \, \\ \hline
\mathfrak{m}_{24} & \, & \, & 2e_1 & \, & e_1 & e_2 & \, & e_2 + e_3 & \, & \, \\ \hline
\mathfrak{m}_{25} & \, & \, & (1 + q)e_1 & \, & e_1 & e_2 & \, & qe_3 & \, & \, \\ \hline
\mathfrak{m}_{26} & \, & \, & 2pe_1 & \, & e_1 & pe_2 - e_3 & \, & e_2 + pe_3 & \, & \, \\ \hline
\mathfrak{m}_{27} & \, & e_1 & - e_2 & \, & e_2 & e_1 & \, & \, & \, & \, \\ \hline
\end{array}
\end{equation*}

\medskip

The algebras $\mathfrak{m}_{9},\mathfrak{m}_{10},\mathfrak{m}_{16}$ and $\mathfrak{m}_{17}$ are non-solvable, the others are solvable.

\bigskip

\begin{center} \bf Non-decomposable non-solvabale $5$-dimensional real Lie algebra \end{center}

\medskip

\begin{equation*}
\begin{array}{|c|c|c|c|c|c|c|c|c|c|c|c|}
\hline
\rotatebox[origin=c]{90}{\mbox{ Алгебры }} & [e_1,e_2] & [e_1,e_3] & [e_1,e_4] & [e_1,e_5] & [e_2,e_3] & [e_2,e_4] & [e_2,e_5] & [e_3,e_4] & [e_3,e_5] & [e_4,e_5] \\ \hline
\mathfrak{g}_{5} & 2e_1 & -e_2 &  e_5 & \, & 2 e_3 & e_4 & -e_5 & \, & e_4 & \, \\ \hline
\end{array}
\end{equation*}

\newpage

\begin{center} \bf Non-decomposable solvable $5$-dimensional real Lie algebras \end{center}

\medskip

\begin{equation*}
\begin{array}{|c|c|c|c|c|c|c|c|c|c|c|c|}
\hline
\rotatebox[origin=c]{90}{\mbox{ Алгебры }} & [e_1,e_2] & [e_1,e_3] & [e_1,e_4] & [e_1,e_5] & [e_2,e_3] & [e_2,e_4] & [e_2,e_5] & [e_3,e_4] & [e_3,e_5] & [e_4,e_5] \\ \hline

\mathfrak{g}_{5,1} & \, & \, & \, & \, & \, & \, & \, & \, &  e_1 &  e_2 \\ \hline
\mathfrak{g}_{5,2} & \, & \, & \, & \, & \, & \,  & e_1 & \, &  e_2 &  e_3  \\ \hline
\mathfrak{g}_{5,3} & \, & \, & \, & \, & \, & e_3  & e_1  &  \, & \,  &  e_2  \\ \hline
\mathfrak{g}_{5,4} & \, & \, & \, & \, & \, & e_1 & \, & \, & e_1 &  \,  \\ \hline
\mathfrak{g}_{5,5} & \, & \, & \, & \, & \, &\, & e_1 & e_1 & e_2 & \, \\ \hline
\mathfrak{g}_{5,6} & \, & \, & \, & \, & \, &\, & e_1 & e_1 & e_2 &  e_3 \\ \hline

\mathfrak{g}_{5,7} & \, & \, & \, & e_1 & \, & \, & \alpha e_2 & \, & \beta e_3 & \gamma e_4 \\ \hline
\mathfrak{g}_{5,8} & \, & \, & \, & \, & \, & \, & e_1 & \, & e_3  &\gamma e_4 \\ \hline
\mathfrak{g}_{5,9} & \, & \, & \, & e_1 & \, & \, & e_1 + e_2 & \,  & \beta e_3 & \gamma e_4 \\ \hline
\mathfrak{g}_{5,10} & \, & \, & \, & \, & \, & \, & e_1 & \, & e_2 &  e_4 \\ \hline
\mathfrak{g}_{5,11} & \, & \, & \, & e_1 & \, & \, & e_1 + e_2 & \, & e_2 + e_3 & \gamma e_4 \\ \hline
\mathfrak{g}_{5,12} & \, & \, & \, & e_1 & \, & \, & e_1 + e_2 & \, & e_2 + e_3 & e_3 + e_4 \\ \hline
\mathfrak{g}_{5,13} & \, & \, & \, & e_1 & \, & \, & \gamma e_2 & \, & p e_3- s e_4  & s e_3 + p e_4 \\ \hline
\mathfrak{g}_{5,14} & \, & \, & \, & \, & \, & \, & e_1 & \, & p e_3 - e_4 & e_3 + p e_4 \\ \hline
\mathfrak{g}_{5,15} & \, & \, & \, & e_1 & \, & \, & e_1 + e_2 & \, & \gamma e_3 & e_3 + \gamma e_4 \\ \hline
\mathfrak{g}_{5,16} & \, & \, & \, & e_1 & \, & \, & e_1 + e_2 & \, & p e_3- s e_4  & s e_3 + p e_4 \\ \hline
\mathfrak{g}_{5,17} & \, & \, & \, & p e_1 - e_2 & \, & \, &e_1 + p e_2 & \, & q e_3- s e_4  & s e_3 + q e_4 \\ \hline
\mathfrak{g}_{5,18} & \, & \, & \, & p e_1 - e_2 & \, & \, & e_1 + p e_2 & \, & e_1 + p e_3- e_4  & e_2 + e_3 - p e_4 \\ \hline
\mathfrak{g}_{5,19} & \, & \, & \, & (1+\alpha)e_1 & e_1 & \, &  e_2 & \, & \alpha e_3 & \beta e_4 \\ \hline
\mathfrak{g}_{5,20} & \, & \, & \, & (1+\alpha)e_1 & e_1 & \, & e_2 & \, & \alpha e_3  & e_1 +(1+ \alpha) e_4 \\ \hline
\mathfrak{g}_{5,21} & \, & \, & \, & 2 e_1 & e_1 & \, & e_2 + e_3 & \, & e_3 + e_4 & e_4 \\ \hline
\mathfrak{g}_{5,22} & \, & \, & \, & \, & e_1 & \, & e_3 & \, & \, &  e_4 \\ \hline
\mathfrak{g}_{5,23} & \, & \, & \, & 2e_1 & e_1 & \, & e_2 + e_3 & \, & e_3 & \beta e_4 \\ \hline
\mathfrak{g}_{5,24} & \, & \, & \, & 2e_1 & e_1 & \, & e_2 + e_3 & \, & e_3 & \varepsilon e_1 + 2 e_4 \\ \hline
\mathfrak{g}_{5,25} & \, & \, & \, & 2pe_1 & e_1 & \, & p e_2 + e_3 & \, & -e_2 +p e_3 & \beta e_4 \\ \hline
\mathfrak{g}_{5,26} & \, & \, & \, & 2pe_1 & e_1 & \, & p e_2 + e_3 & \, & -e_2 +p e_3 & \varepsilon e_1 + 2 p e_4 \\ \hline
\mathfrak{g}_{5,27} & \, & \, & \, & e_1 & e_1 & \, & \, & \, & e_3 + e_4 & e_1 + e_4 \\ \hline
\mathfrak{g}_{5,28} & \, & \, & \, & (1+\alpha)e_1 & e_1 & \, & \alpha e_2 & \, & e_3 + e_4 &  e_4 \\ \hline
\mathfrak{g}_{5,29} & \, & \, & \, & e_1 & e_1 & \, & e_2 & \, & e_4 & \, \\ \hline
\mathfrak{g}_{5,30} & \, & \, & \, & (2 +h)e_1 & \, &  e_1 & ( 1+h) e_2 & e_2  & h e_3 & e_4 \\ \hline
\mathfrak{g}_{5,31} & \, & \, & \, & 3 e_1 &  \,& e_1  &  2 e_2 &  e_2  & e_3  & e_3 + e_4 \\ \hline
\mathfrak{g}_{5,32} & \, & \, & \, & e_1 & \,  & e_1  & e_2 & e_2 & h e_1 + e_3 &  \, \\ \hline
\mathfrak{g}_{5,33} & \, & \, & e_1  & \, & \, & \, & e_2 & \beta e_3 & \gamma e_3 & \, \\ \hline
\mathfrak{g}_{5,34} & \, & \, & \alpha e_1  & e_1& \, & e_2 & \, & e_3 & e_2  & \, \\ \hline
\mathfrak{g}_{5,35} & \, & \, & h e_1  & \alpha e_1 & \, & e_2 & - e_3 & e_3  &  e_2 & \, \\ \hline
\mathfrak{g}_{5,36} & \, & \, & e_1  & \, & e_1 & e_2 & -e_2 & \, & e_3 &  \, \\ \hline
\mathfrak{g}_{5,37} & \, & \, & 2 e_1  & \, & e_1 &e_2 & -e_3 & e_3 & e_2 & \, \\ \hline
\mathfrak{g}_{5,38} & \, & \, & e_1  & \, & \, & \, & e_2 & \, & \, & e_3 \\ \hline
\mathfrak{g}_{5,39} & \, & \, & e_1  &  -e_2 & \, & e_2 & \,e_1 &\, & \,  & e_3 \\ \hline
\end{array}
\end{equation*}

\newpage

\end{document}